\documentclass[a4paper,12pt]{article}
\usepackage{latexsym}
\usepackage{amsmath,amsthm,amsfonts,amssymb}
\usepackage{dsfont}
\usepackage{upgreek}
\usepackage{tipa}
\usepackage{graphicx}
\usepackage{enumitem} 
\usepackage[square,sort,comma,numbers]{natbib}
\usepackage[utf8]{inputenc}
\usepackage{float}
\newtheorem{theorem}{Theorem}[section]
\newtheorem{corollary}[theorem] {Corollary}

\newtheorem{definition}[theorem]{Definition}

\newtheorem{lemma} [theorem]{Lemma}

\theoremstyle{remark}
\newtheorem{remark}[theorem]{Remark}
\vspace{5cm}
\begin{document}
		
	\begin{center}
		{{\Large \textbf {Fractional differ-integral involving bicomplex
Prabhakar function in the kernel and applications 
}}
			\medskip

	{\sc  Urvashi Purohit Sharma$^{1},$\ \ Ritu Agarwal$^{2*}$ }\\
	{\footnotesize $^{1}$Department of Science and Humanities, Kalaniketan Polytechnic College Jabalpur-482001, INDIA}\\
	{  \footnotesize $^{2}$Department of Mathematics,
		Malaviya National Institute of Technology, Jaipur-302017, INDIA}
}\\
{\footnotesize E-mail: {\it $^{1}$urvashius100@gmail.com,  $^{2} $ragarwal.maths@mnit.ac.in }
	
}        \end{center}
\thispagestyle{empty}
	
\hrulefill
\begin{abstract}
\indent

This paper introduces the bicomplex Prabhakar derivative, extending fractional calculus to four-dimensional bicomplex spaces.  
Using the generalized kernel involving bicomplex Prabhakar function, we construct the bicomplex Prabhakar derivative and prove fundamental operational properties including linearity, composition rules, and connections to Riemann-Liouville and Caputo operators. We further investigate how fractional operators act on the bicomplex Prabhakar function itself, developing integral representations and transformation formulas. 

This work provides a rigorous foundation for modeling complex phenomena with memory effects and multi-dimensional coupling in bicomplex domains. The rich algebraic structure of bicomplex numbers, combined with the flexibility of Prabhakar kernels, offers a versatile framework applicable across diverse scientific and engineering disciplines.

\end{abstract}

\hrulefill
	
{\small \textbf{Keywords:} } Bicomplex numbers, Prabhakar function, Mittag-Leffler function, fractional calculus, bicomplex Laplace transform.\\

\textbf{MSC 2020 Classification:} 33E12, 30G35, 26A33, 34A08.

*Corresponding author [Correspondence to ragarwal.maths@mnit.ac.in]
	
\section{Introduction}
Bicomplex fractional calculus extends classical fractional calculus \cite{kilbas2006} to four-dimensional bicomplex spaces, combining the advantages of fractional operators (modeling memory effects and non-local dynamics) with the richer structure of commutative bicomplex analysis \cite{price1991}. This framework finds compelling applications across diverse domains: electromagnetic theory (fractional Maxwell equations in anisotropic media \cite{kumar2023generalization}), heat transfer and anomalous diffusion, population dynamics (coupled predator-prey models with hereditary effects), electric circuit analysis with coupled fractional elements \cite{thirumalai2025perception}, and quantum mechanics (time-fractional Schrödinger equations). The Prabhakar derivative, characterized by three-parameter Mittag-Leffler functions \cite{prabhakar1971singular}, offers greater flexibility than Riemann-Liouville or Caputo operators through an additional parameter enabling better experimental data fitting and regularization properties. Despite recent progress in bicomplex Riemann-Liouville operators \cite{goswami2022riemann}, bicomplex Prabhakar derivatives remain unexplored, motivating this work to establish their theoretical foundations and investigate applications in coupled multi-dimensional systems exhibiting memory and non-local interactions.

Bicomplex numbers were introduced in the late 19th century by Corrado Segre \cite{csegre1892} as part of the broader study of hypercomplex systems. They form a commutative algebra over the real field, unlike quaternions, which are non-commutative. Bicomplex numbers provide a natural framework for handling problems involving multiple complex variables while preserving commutativity, making them an elegant generalization of ordinary complex numbers. A number of features of the bicomplex numbers have been found.  The various algebraic and geometric characteristics of bicomplex numbers and their applications have been the focus of research in recent years (see,  e.g. \cite{  rochon2004b, ronn2001, meluna2012}).

\subsection{Bicomplex Numbers}
Segre \cite{csegre1892} defined the set of bicomplex numbers as:
\begin{definition}[Bicomplex Number]
In terms of real components, the set of bicomplex numbers is defined as
\begin{equation}
\mathbb{T} =\{\xi: \xi = x_0 +i_1 x_1 +i_2 x_2 + j x_3~| ~x_0,~x_1,~x_2,~x_3~ \in \mathbb{R} \}, 	
\end{equation}
and in terms of complex numbers it can be written as
\begin{equation}\label{eq:bc}
\mathbb{T} = \{\xi: \xi =z_1 + i_2z_2~ |~z_1,~z_2 ~\in \mathbb{C}\}.	
\end{equation} 
\end{definition}
We shall use the  notations: $i_1^2=-1, ~i_2^2=-1,~ j=i_1i_2,~ j^2=1$, $x_0=\operatorname{Re}(\xi),~x_1=\operatorname{Im}_{i_1}(\xi),~x_2=\operatorname{Im}_{i_2}(\xi),~x_3=\operatorname{Im}_{j}(\xi).$  Here, $i_1,i_2$ are imaginary units while $j$ is the hyperbolic unit.

Two non trivial idempotent zero divisors in $\mathbb{T},$ denoted by $e_1$ and $e_2$ and defined as follows \cite{price1991}: \\
$	e_1 =  \dfrac{1 + i_1i_2}{2}=\dfrac{1+j}{2},~ e_2   =\dfrac  {1 - i_1i_2}{2} =\dfrac{1-j}{2}$, $e_1.e_2   = 0$,  $ e_1+ e_2 = 1$	   and $e_1^2=e_1,~e_2^2=e_2$.

In terms of $e_1$ and $e_2$, each element $\xi\in \mathbb{T}$ has an unique idempotent representation that is specified by
\begin{equation}\label{eq:idempotent def}
\xi = z_1 + i_2 z_2   =  \xi_1 e_1 + \xi_2 e_2,
\end{equation}  where  $\xi_1 = (z_1 - i_1 z_2)$ and $\xi_2 = (z_1 + i_1 z_2).$ \\

\begin{definition}[Bicomplex Partial Order]
\label{def:bicomplex_order}
For $\xi, \eta \in T$ with idempotent representations 
$\xi = \xi_1 e_1 + \xi_2 e_2$ and $\eta = \eta_1 e_1 + \eta_2 e_2$, 
we write 
\[
\xi \prec \eta \quad \text{if and only if} \quad |\xi_1| < |\eta_1| \text{ and } |\xi_2| < |\eta_2|.
\]
\end{definition}

Projection mappings  $P_1 : 	\mathbb{T} \rightarrow T_1 \subseteq \mathbb{C},$ $ P_2 : \mathbb{T} \rightarrow T_2 \subseteq \mathbb{C} $ for a bicomplex number $\xi=z_1+i_2z_2$ are defined as  (see, e.g. \cite{rochon2004b}):\\
\begin{equation}\label{eq:p1}
P_1(\xi)=	\xi_1 \in T_1~\text{and }~P_2(\xi)=\xi_2 \in T_2,
\end{equation}

where
\begin{equation}\label{eq:  a space A1}
T_1= \{\xi_1= z_1-i_1z_2 \hspace{1mm}| z_1,z_2 \in \mathbb{C}\}~\text{and}~ 	T_2= \{\xi_2= z_1+i_1z_2\hspace{1mm} | z_1,z_2 \in \mathbb{C}\}.
\end{equation}
	The present literature is limited to fundamental aspects of bicomplex numbers. 
    Other properties can be found in \cite{price1991,ronn2001,riley1953,meluna2012}.

		Many authors have  worked on different generalization of Mittag-Leffler function (ML function) their properties and application of function in the area of fractional calculus. These functions formed as a result of the development of the fractional calculus and the solution of new forms of integral and differential equations \cite{gorenflo2014}.\\	

In recent research, efforts have been made to extend   Mittag-Leffler type function and applications \cite{agarwal2023bicomplex,ragarwal2022,ragarwal2022sept,ragarwal2021sept,ragarwal2023solution,ragarwal2025} in the bicomplex space. 
  Agarwal et al. \cite{ragarwal2022} introduce bicomplex one parameter ML functionas as 
\begin{equation}\label{eq :ml1}
\mathbb{E}_{m}(\zeta)=\sum_{u=0}^{\infty}\frac{\zeta^u}{\Gamma_2(m u+1 )}, \end{equation}
where $|\operatorname{Im_j}(m)|<\operatorname{Re}(m).$
Here $\Gamma_2$ is the bicomplex Gamma function \cite{goyal2006}. 


Bicomplex two parameter ML function defined by Sharma et al. \cite{ragarwal2021sept}
\begin{equation}\label{eq :ml2}
\mathbb{E}_{m,n}(\zeta)=\sum_{u=0}^{\infty}\frac{\zeta^u}{\Gamma_2(m u+n )},  \end{equation}
where $|\operatorname{Im_j}(m)|<\operatorname{Re}(m),~|\operatorname{Im_j}(n)|<\operatorname{Re}(n).$

Bicomplex one parameter k-ML function defined by Bakhet et al \cite{bakhet2025new} as follows
\begin{equation}
\mathbb{E}_{k,m}(\zeta) = \sum_{u=0}^{\infty} 
\frac{\zeta^{u}}{\Gamma_{2,k}(mu + 1)},\quad \zeta,m\in \mathbb{T},
\label{eq:kml}
\end{equation}
where $|\operatorname{Im_j}(m)|<\operatorname{Re}(m).$
Here $\Gamma_{2,k}$ is k-bicomplex Gamma function \cite{zayed2025k} defined as
\begin{equation}\label{k gamma}
\Gamma_{2,k}(\zeta) = \int_{\Upsilon} 
e^{-\frac{t^{k}}{k}}  t^{\zeta - 1}  dt,
\end{equation}
where $\Upsilon = (\gamma_{1}, \gamma_{2}), 
\text{with } \gamma_{1} \equiv \gamma_{1}(t_{1}), \, 
\gamma_{2} \equiv \gamma_{2}(t_{2})$ and $ t_{1}: 0 ~\text{to}~ \infty,~t_{2}: 0 ~~\text{to}~ \infty.$ \\
By substituting $k=1$ in the  equation \eqref{k gamma}, we get bicomplex Gamma function  defined by Goyal et al \cite{goyal2006}. 

 Bakhet et al \cite{bakhet2025bicomplex} introduced the two parameter k-ML  function as follows
\begin{equation}
\mathbb{E}_{k,m,n}(\zeta) = \sum_{u=0}^{\infty} 
\frac{\zeta^{u}}{\Gamma_{2,k}(mu + n)},\quad \zeta,m,n\in \mathbb{T},~k\in \mathbb{R}^+,
\label{eq:3.1}
\end{equation}
where $|\operatorname{Im_j}(m)|<\operatorname{Re}(m),~|\operatorname{Im_j}(n)|<\operatorname{Re}(n).$
They  established the corresponding k-Riemann–Liouville fractional calculus within the bicomplex setting for the extended function. As special cases, we can get $\mathbb{E}_{k,m,1}(\zeta)=\mathbb{E}_{k,m}(\zeta)$, $\mathbb{E}_{1,m,n}(\zeta)=\mathbb{E}_{m,n}(\zeta)$ and $\mathbb{E}_{1,m,1}(\zeta)=\mathbb{E}_{m}(\zeta)$.

In the papers \cite{bakhet2025new,bakhet2025bicomplex}, Bakhet et al.  introduced a new deformation parameter k through the bicomplex 
k-Gamma function, thereby generalizing the earlier bicomplex Mittag-Leffler functions into a scalable analytic family. Their work forms a natural continuation of the formulations by Sharma et al. \cite{ragarwal2021sept} and Agrawal et al. \cite{ragarwal2022}, which respectively defined the one-parameter and two-parameter Mittag-Leffler functions in the bicomplex domain using the bicomplex Gamma function, thus giving rise to a hierarchy of bicomplex fractional models.

In a recent article, Sharma et al. \cite{ragarwal2025} introduced the bicomplex Prabhakar function  using the bicomplex gamma function \cite{goyal2006}. Also discussed its properties and derived its  bicomplex Laplace and Mellin transforms. The bicomplex three-parameter  ML function (Prabhakar function ) is defined as 
\begin{equation}\label{eq: bc ml 3 para}
\mathbb{E}_{m,n}^l(\zeta)=\sum_{u=0}^{\infty}\frac{(l)_u}{u!\Gamma_2(m u+n )}\zeta^u,  \end{equation}
where $\zeta, m,n,l\in \mathbb{T},$ 
$\zeta= z_1+i_2z_2= \zeta_1e_1+\zeta_2e_2$, $m= m_1e_1+m_2e_2,  ~ n=n_1e_1+n_2e_2$ with $|\operatorname{Im_j}(m)|<\operatorname{Re}(m),~|\operatorname{Im_j}(n)|<\operatorname{Re}(n).$	Here $(l)_u$ is bicomplex pochhammer symbol defined  as \cite{goyal2006}
\[
(\ell)_u = \ell(\ell+1)(\ell+2)\cdots(\ell+u-1) = \frac{\Gamma_2(\ell+u)}{\Gamma_2(\ell)}.
\]

Kumar et al. \cite{akumar2011a} introduced the bicomplex Laplace transform and its basic properties. These findings have a lot of potential in the area of signal processing.
These results can be highly applicable in the field of signal processing.\\
\begin{definition}[Bicomplex	  Laplace transform]
	Let $f(t)$ be a bicomplex valued function of exponential order $M\in \mathbb{R}$. Then Laplace
	transform of $f(t)$ for $t \ge 0$  defined as:
	\begin{equation}\label{eq:bc lap def}
	\mathcal{L}(f(t);\xi) =\tilde{f}(\xi)= \int_{0}^{\infty}f(t)e^{-\xi t}dt.
	\end{equation}
	Here $\tilde{f}(\xi)$ exist and is convergent for all $\xi \in  \Omega$
	where
		\begin{equation}\label{eq:lap domain}\notag
	\Omega= \{\xi:H_\rho(\xi)~\text{ represents a Right half plane}~ a_0>	M+|a_3|  \},
	\end{equation}	
   and  $\xi  = a_0 + a_1i_1 + a_2i_2 + a_3j.$\\
 \end{definition}  
\noindent   For more details about bicomplex Laplace transform see \cite{akumar2011a,ragarwal2014a}.  
Agrawal et al. \cite{ragarwal2014a} discussed the convolution theorem for Laplace transform as follows
\begin{theorem}

Let $\mathcal{L}((f(t);\xi)=\tilde{f}(\xi)$ and $\mathcal{L}((g(t);\xi)= \tilde{g}(\xi)$ with 
$ 
\operatorname{Re}(\xi )>M +|\operatorname{Im_j}(\xi)| $ 

where $M=\max(M_1,M_2) $ and $f(t)$ and $g(t)$ are  of exponential order $M_1$ and $M_2$ respectively. Then  
\[
\mathcal{L}( (f * g)(t);\xi)  = \tilde{f}~\tilde{g}.
\]
\end{theorem}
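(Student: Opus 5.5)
The plan is to reduce the bicomplex statement to two copies of the classical complex convolution theorem via the idempotent representation \eqref{eq:idempotent def}. Write $\xi=\xi_1e_1+\xi_2e_2$ and decompose the bicomplex-valued functions as $f(t)=f_1(t)e_1+f_2(t)e_2$ and $g(t)=g_1(t)e_1+g_2(t)e_2$, with complex-valued $f_k=P_k(f)$ and $g_k=P_k(g)$. Because $e_1e_2=0$ and $e_k^2=e_k$, multiplication acts componentwise. Hence $e^{-\xi t}=e^{-\xi_1 t}e_1+e^{-\xi_2 t}e_2$, and the convolution $(f*g)(t)=\int_0^t f(\tau)g(t-\tau)\,d\tau$ splits as $(f_1*g_1)(t)e_1+(f_2*g_2)(t)e_2$. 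Integration is $\mathbb{R}$-linear and commutes with the fixed idempotents, so \eqref{eq:bc lap def} becomes
\[
\mathcal{L}(f;\xi)=\mathcal{L}(f_1;\xi_1)e_1+\mathcal{L}(f_2;\xi_2)e_2 .
\]
The analogous identities hold for $g$ and for $f*g$.

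Next I will translate the hypothesis on $\xi$ into conditions on $\xi_1,\xi_2$. With $\xi=a_0+a_1i_1+a_2i_2+a_3j$, we get $\xi_1=(a_0+a_3)+i_1(a_1-a_2)$ and $\xi_2=(a_0-a_3)+i_1(a_1+a_2)$. So $\operatorname{Re}\xi_1=a_0+a_3$ and $\operatorname{Re}\xi_2=a_0-a_3$, and the condition $a_0>M+|a_3|$ is exactly $\operatorname{Re}\xi_k>M$ for $k=1,2$. I also need that exponential order $M_1$ for $f$ gives exponential order $M_1$ for each $f_k$. This should follow from the definition used in \cite{akumar2011a}, for instance via a bound $|f_k(t)|\le C e^{M_1 t}$ obtained from the norm comparison between $\|f\|$ and $|f_k|$, which are equivalent up to the factor $\sqrt{2}$. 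The same argument applies to $g$, and therefore each $f_k$ and $g_k$ has exponential order at most $M=\max(M_1,M_2)$.

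I can then apply the classical complex convolution theorem in each component. For $\operatorname{Re}\xi_k>M$ it gives $\mathcal{L}(f_k*g_k;\xi_k)=\tilde f_k(\xi_k)\tilde g_k(\xi_k)$. Recombining with the componentwise product rule yields
\[
\bigl(\tilde f_1\tilde g_1\bigr)e_1+\bigl(\tilde f_2\tilde g_2\bigr)e_2=(\tilde f_1e_1+\tilde f_2e_2)(\tilde g_1e_1+\tilde g_2e_2)=\tilde f\,\tilde g .
\]
The main point needing care is justifying the splitting of the integral and the convergence region in the second step. In particular, the improper integral converges in $\mathbb{T}$ if and only if both projected integrals converge, which again follows from the equivalence of the norms.

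As an alternative check, I could run a direct Fubini argument on $\int_0^\infty\int_0^t f(\tau)g(t-\tau)e^{-\xi t}\,d\tau\,dt$, substituting $u=t-\tau$. Absolute integrability would come from the exponential bounds, since $\|e^{-\xi t}\|\lesssim e^{-(a_0-|a_3|)t}$. That route is longer, so the idempotent reduction is the approach I would use.
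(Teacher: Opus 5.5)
Your proof is correct. The idempotent splitting of $e^{-\xi t}$, of $f*g$ and of the Laplace integral is sound, as are the translation of $a_0>M+|a_3|$ into $\operatorname{Re}\xi_1,\operatorname{Re}\xi_2>M$, the transfer of exponential order via $|\xi_k|\le\sqrt{2}\,\|\xi\|$, and the componentwise recombination. The paper gives no proof of this statement itself (it is quoted from \cite{ragarwal2014a}), but your reduction to two classical complex convolution theorems is the same componentwise technique the paper uses for its other results, for example in its proof of the convolution identity for the bicomplex Prabhakar function.
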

Bicomplex
Laplace transform of the Prabhakar function	is given by \cite{ragarwal2025}
\begin{equation}\label{eq:bclt 3para}
\mathcal{L}(t^{n-1}\mathbb{E}_{ m,n}^l(r t^{ m});\xi)=\int_{0}^{\infty}e^{-\xi t}\mathbb{E}_{ m, n}^l(r t^{ m})t^{n-1}dt=\frac{\xi^{ ml-n}}{(\xi^{ m}-r)^l},
\end{equation}\label{eq:plap}
where	
$|\operatorname{Im_j}(n)|<\operatorname{Re}(n),~|\operatorname{Im_j}(\xi)|<\operatorname{Re}(\xi),~|r\xi^{- m}|_j\prec1.$\\

\noindent The generalisation of integration and differentiation operators in fractional calculus has been the subject of numerous studies in recent years.
    As fractional calculus continues to demonstrate its utility in modeling real-world phenomena-from anomalous diffusion to long-memory time series—the bicomplex extension developed in this work provides a natural generalization for systems where multiple coupled variables interact through non-local, memory-dependent relationships. The future development of this theory promises to reveal new mathematical structures and practical applications that bridge classical analysis, fractional calculus, and complex systems science.

The introduction of bicomplex Prabhakar derivatives opens a new chapter in fractional calculus. The rich algebraic structure of bicomplex numbers, combined with the flexibility of Prabhakar kernels, offers a versatile framework applicable across diverse scientific and engineering disciplines.

The Prabhakar derivative is a generalized operator in fractional calculus, introduced as an extension of the classical Riemann–Liouville and Caputo derivatives \cite{kilbas2004generalized}. It is based on the Prabhakar function  and is useful for describing systems with memory and hereditary properties. The Riemann-Liouville integral, derivative, Caputo fractional derivative, and other definitions of fractional integrals and derivatives can be found in the literature (see, e.g. \cite{miller1993introduction,kibas1993,kilbas2006}).

\noindent In this paper we have studied the   fractional calculus involving  bicomplex Prabhakar function  \cite{ragarwal2025}. We have derived the  bicomplex extension of the  Prabhakar  integral operator, Prabhakar derivative and regularized Prabhakar derivative along with their properties.
We have obtained the Laplace transform of the regularized Prabhakar fractional derivative, which provides a powerful and systematic tool for handling fractional initial value problems. This transform greatly simplifies the analysis of memory-dependent processes and enables the effective treatment of complex fractional differential equations. Using this approach, we solve the Cauchy problem involving Prabhakar-type fractional operators, demonstrating both the applicability and efficiency of the method. These results underscore the utility of the regularized Prabhakar derivative in modeling a wide class of fractional dynamic systems with non-local memory effects.

\section {Fractional calculus of bicomplex Prabhakar function}

\noindent 
Here, we study the  compositions of bicomplex Riemann–Liouville fractional integral $\mathbb{I}_{a+}^{\alpha}$ and derivative $\mathbb{D}_{a+}^{\alpha} $ \cite{goswami2022riemann,goswami2021generalization} associated with the bicomplex Prabhakar function.
\begin{theorem}\label{th:frac pr}
	Let $a \in \mathbb{R}_{+} = [0, \infty)$, and let $\alpha, m, n, l, r \in \mathbb{T}$ with $|\operatorname{Im_j}(\upalpha)|<\operatorname{Re}(\upalpha),~|\operatorname{Im_j}(m )|<\operatorname{Re}(m),~|\operatorname{Im_j}(n )|<\operatorname{Re}(n).$
	 Then for $x > a$, the following formulas hold:

	\begin{enumerate}
	\item 
	$	\mathbb{I}_{a+}^{\alpha} \big[(t - a)^{n - 1} \mathbb{E}_{m, n}^{l}\big(r (t - a)^{m}\big)\big])(x)
	=(x - a)^{n + \alpha - 1} \mathbb{E}_{m, n + \alpha}^{l}\big(r (x - a)^{m})\big..$ 
	\item 
$	\mathbb{D}_{a+}^{\alpha} \big[(t - a)^{n - 1} \mathbb{E}_{m, n}^{l}\big(r (t - a)^{m}\big)\big])(x)
	=(x - a)^{n - \alpha - 1} \mathbb{E}_{m, n - \alpha}^{l}\big(r (x - a)^{m})\big..$
\end{enumerate}

\end{theorem}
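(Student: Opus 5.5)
We argue termwise, as in the classical complex case, using the series definition \eqref{eq: bc ml 3 para} and the bicomplex Riemann--Liouville power rule
\[
\mathbb{I}_{a+}^{\alpha}\big[(t-a)^{\beta-1}\big](x)=\frac{\Gamma_2(\beta)}{\Gamma_2(\beta+\alpha)}(x-a)^{\beta+\alpha-1},
\]
valid for $|\operatorname{Im_j}(\beta)|<\operatorname{Re}(\beta)$. The analogous formula holds for $\mathbb{D}_{a+}^{\alpha}$ with $\alpha$ replaced by $-\alpha$. Both rules can be read off in idempotent components from \cite{goswami2022riemann}.

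\textbf{Part 1.} We write
\[
(t-a)^{n-1}\mathbb{E}^{l}_{m,n}\big(r(t-a)^m\big)=\sum_{u\ge 0}\frac{(l)_u\, r^u}{u!\,\Gamma_2(mu+n)}(t-a)^{mu+n-1}.
\]
The hypotheses on $m$ and $n$ ensure $|\operatorname{Im_j}(mu+n)|<\operatorname{Re}(mu+n)$ for every $u$, so the power rule applies to each term. It gives
\[
\mathbb{I}_{a+}^{\alpha}\big[(t-a)^{mu+n-1}\big](x)=\frac{\Gamma_2(mu+n)}{\Gamma_2(mu+n+\alpha)}(x-a)^{mu+n+\alpha-1}.
\]
The factors $\Gamma_2(mu+n)$ cancel, and resumming produces $(x-a)^{n+\alpha-1}\mathbb{E}^{l}_{m,n+\alpha}\big(r(x-a)^m\big)$.

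\textbf{Interchange of sum and integral.} This is the main technical point. We pass to the idempotent representation. Every quantity splits as $P_1(\cdot)e_1+P_2(\cdot)e_2$, and $\Gamma_2$ splits as $\Gamma(\cdot_1)e_1+\Gamma(\cdot_2)e_2$. The operator $\mathbb{I}_{a+}^{\alpha}$ acts componentwise as the complex Riemann--Liouville integral of order $\alpha_k$, where $\operatorname{Re}\alpha_k>0$ follows from $|\operatorname{Im_j}\alpha|<\operatorname{Re}\alpha$. Each component series is the classical complex Prabhakar series, which is an entire function. It therefore converges uniformly on compact subsets of $(a,x]$. After factoring out $(t-a)^{n_k-1}$, the series is dominated by an integrable function, so the classical theorem for complex Prabhakar functions applies in each component. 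Recombining with $e_1,e_2$ gives Part 1.

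\textbf{Part 2.} We write $\mathbb{D}_{a+}^{\alpha}=D^{p}\,\mathbb{I}_{a+}^{p-\alpha}$, with $p$ a positive integer exceeding the real parts of both idempotent components of $\alpha$. Part 1 with order $p-\alpha$ gives
\[
(x-a)^{n+p-\alpha-1}\mathbb{E}^{l}_{m,n+p-\alpha}\big(r(x-a)^m\big).
\]
We then differentiate $p$ times termwise, which is legitimate because the component series converge locally uniformly. Each derivative acts on a power as
\[
D^p\,(x-a)^{mu+n+p-\alpha-1}=\frac{\Gamma_2(mu+n+p-\alpha)}{\Gamma_2(mu+n-\alpha)}(x-a)^{mu+n-\alpha-1}.
\]
This shifts the second parameter from $n+p-\alpha$ to $n-\alpha$ and yields the stated formula.

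The case where $\Gamma_2(mu+n-\alpha)$ has a pole needs no separate treatment. There the corresponding term vanishes, consistent with $1/\Gamma_2$ at a pole being zero, and this is interpreted componentwise.
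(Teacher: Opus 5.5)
Your proof is correct, but it takes a more self-contained route than the paper. The paper computes nothing itself. It splits both sides into idempotent components, quotes the complex identities (3.1) and (3.2) of Kilbas, Saigo and Saxena for each component, and recombines with $e_1,e_2$; Part 2 is a separate citation, not derived from Part 1. You instead work termwise with the series and the bicomplex power rule. You justify the sum--integral interchange by dominated convergence in each component: the dominant $C\,(x-t)^{\operatorname{Re}\alpha_k-1}(t-a)^{\operatorname{Re}n_k-1}$, with $C=\sum_u \frac{|(l_k)_u|}{u!\,|\Gamma(m_ku+n_k)|}\,|r_k|^u(x-a)^{u\operatorname{Re}m_k}<\infty$, is exactly what is needed. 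You then obtain Part 2 from Part 1 through $\mathbb{D}^{\alpha}_{a+}=D^{p}\,\mathbb{I}^{p-\alpha}_{a+}$, using a single integer $p>\max(\operatorname{Re}\alpha_1,\operatorname{Re}\alpha_2)$.

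The paper's route buys brevity. Yours actually proves the component identities, and it makes explicit three points the paper passes over. First, $|\operatorname{Im_j}(\alpha)|<\operatorname{Re}(\alpha)$ forces $\operatorname{Re}\alpha_1,\operatorname{Re}\alpha_2>0$, which is needed to invoke the complex results at all. Second, one common integer order must serve both components. Third, poles of $\Gamma_2(mu+n-\alpha)$ in a single component are harmless, because $1/\Gamma_2$ vanishes there componentwise.

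Three small things to tighten. The sentence ``so the classical theorem for complex Prabhakar functions applies in each component'' mixes the two routes and is superfluous once the termwise computation is done. Termwise differentiation needs more than convergence of the original series. It needs locally uniform convergence of the \emph{differentiated} series on $(a,\infty)$, which holds because that series is again of Prabhakar type with a shifted second parameter. Alternatively, use holomorphy of each term in a complex variable $z$ with $\operatorname{Re}(z-a)>0$ together with Weierstrass's theorem. Finally, add one line showing that enlarging $p$ does not change the operator: $D^{p+1}\mathbb{I}^{p+1-\alpha}_{a+}=D^{p}(D\,\mathbb{I}^{1}_{a+})\mathbb{I}^{p-\alpha}_{a+}=D^{p}\mathbb{I}^{p-\alpha}_{a+}$.
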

\begin{proof}
	(1). By \cite[Theorem 3, equation (3.1)]{kilbas2004generalized} complex Prabhakar function satisfies for  \(\operatorname{Re}(\alpha_i) > 0,~\operatorname{Re}(m_i) > 0, ~\operatorname{Re}(n_i) > 0\), (i=1,2)
    \begin{equation}
	(\mathbb{I}_{a+}^{\alpha_i} \big[(t - a)^{n_i - 1} \mathbb{E}_{m_i, n_i}^{l_i}\big(r_i (t - a)^{m_i}\big)\big])(x)
			=(x - a)^{n_i + \alpha_i - 1} \mathbb{E}_{m_i, n_i + \alpha_i}^{l_i}\big(r_i (x - a)^{m_i})\big.
				\end{equation}
    By using the idempotent representation, 
    we have
	\begin{equation}
	\begin{split}
		\mathbb{I}_{a+}^{\alpha} \big[(t - a)^{n - 1} \mathbb{E}_{m, n}^{l}\big(r (t - a)^{m}\big)\big])(x)
		=&	(\mathbb{I}_{a+}^{\alpha_1} \big[(t - a)^{n_1 - 1} \mathbb{E}_{m_1, n_1}^{l_1}\big(r_1 (t - a)^{m_1}\big)\big])(x)e_1\\
		&+	(\mathbb{I}_{a+}^{\alpha_2} \big[(t - a)^{n_2 - 1} \mathbb{E}_{m_2, n_2}^{l_2}\big(r_2 (t - a)^{m_2}\big)\big])(x)e_2\\
			=&(x - a)^{n_1 + \alpha_1 - 1} \mathbb{E}_{m_1, n_1 + \alpha_1}^{l_1}\big(r_1 (x - a)^{m_1})\big.e_1\\
			&+(x - a)^{n_2 + \alpha_2 - 1} \mathbb{E}_{m_2, n_2 + \alpha_2}^{l_2}\big(r_2 (x - a)^{m_2})\big.e_2\\
=&	(x - a)^{n + \alpha - 1} \mathbb{E}_{m, n + \alpha}^{l}\big(r (x - a)^{m})\big..
	\end{split}
	\end{equation}
	(2). By \cite[Theorem 3, equation (3.2)]{kilbas2004generalized} complex Prabhakar function satisfies for  \(\operatorname{Re}(\alpha_i) > 0,~\operatorname{Re}(m_i) > 0, ~\operatorname{Re}(n_i) > 0\), (i=1,2) 
    \begin{equation}
        \mathbb{D}_{a+}^{\alpha_i} \big[(t - a)^{n_i - 1} \mathbb{E}_{m_i, n_i}^{l_i}\big(r_i (t - a)^{m_i}\big)\big])(x)=(x - a)^{n_i - \alpha_i - 1} \mathbb{E}_{m_i, n_i - \alpha_i}^{l_i}\big(r_i (x - a)^{m_i})\big.
    \end{equation}
By using the idempotent representation  
       \begin{equation}
	\begin{split}
		\mathbb{D}_{a+}^{\alpha} \big[(t - a)^{n - 1} \mathbb{E}_{m, n}^{l}\big(r (t - a)^{m}\big)\big])(x)=& \mathbb{D}_{a+}^{\alpha_1} \big[(t - a)^{n_1 - 1} \mathbb{E}_{m_1, n_1}^{l_1}\big(r_1 (t - a)^{m_1}\big)\big])(x)e_1\\
		&+\mathbb{D}_{a+}^{\alpha_2} \big[(t - a)^{n_2 - 1} \mathbb{E}_{m_2, n_2}^{l_2}\big(r_2 (t - a)^{m_2}\big)\big])(x)e_2\\
	=&(x - a)^{n_1 - \alpha_1 - 1} \mathbb{E}_{m_1, n_1 - \alpha_1}^{l_1}\big(r_1 (x - a)^{m_1})\big.e_1\\
	&+(x - a)^{n_2 - \alpha_2 - 1} \mathbb{E}_{m_2, n_2 - \alpha_2}^{l_2}\big(r_2 (x - a)^{m_2})\big.e_2\\
	=&(x - a)^{n - \alpha - 1} \mathbb{E}_{m, n - \alpha}^{l}\big(r (x - a)^{m})\big..
	\end{split}
	\end{equation}
\end{proof}
\noindent  Convolution identity is fundamental in fractional calculus with Prabhakar operators, since it allows composition laws and semi group properties of fractional integrals/derivatives defined with these kernels. 
Convolution identity for bicomplex Prabhakar function is given by the following result.
\begin{theorem}[Convolution Identity]
	Let $m, n, l, \nu, \sigma, r \in \mathbb{T}$ with $|\operatorname{Im_j}(m) |<\operatorname{Re}(m),|\operatorname{Im_j}(n )|<\operatorname{Re}(n),|\operatorname{Im_j}(\nu)|<\operatorname{Re}(\nu).$ 
    Then
	\begin{equation}\label{eq: result}
	\int_0^x (x - t)^{n - 1} \mathbb{E}_{m,n}^{l}\big(r(x - t)^{m}\big) \, t^{\nu - 1} \mathbb{E}_{m,\nu}^{\sigma}\big(r t^{m}\big) \, dt = x^{n + \nu - 1} \mathbb{E}_{m,n + \nu}^{l + \sigma}\big(r x^{m}\big).
	\end{equation}
	
\end{theorem}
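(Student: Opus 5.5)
Two routes are natural: reduce to the idempotent components, as in the proof of Theorem~\ref{th:frac pr}, or use the bicomplex Laplace transform together with the convolution theorem. I would use the Laplace route as the main argument and the idempotent route as a cross-check.

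\textbf{Laplace route.} The left-hand side of \eqref{eq: result} is the Laplace convolution $(f*g)(x)$, where
\[
f(t)=t^{n-1}\mathbb{E}_{m,n}^{l}(rt^{m}), \qquad g(t)=t^{\nu-1}\mathbb{E}_{m,\nu}^{\sigma}(rt^{m}).
\]
By \eqref{eq:bclt 3para}, for $\xi$ with $|\operatorname{Im_j}(\xi)|<\operatorname{Re}(\xi)$ and $|r\xi^{-m}|_j\prec 1$,
\[
\tilde f(\xi)=\frac{\xi^{ml-n}}{(\xi^{m}-r)^{l}}, \qquad \tilde g(\xi)=\frac{\xi^{m\sigma-\nu}}{(\xi^{m}-r)^{\sigma}}.
\]
The convolution theorem of \cite{ragarwal2014a} then gives
\[
\mathcal{L}(f*g;\xi)=\frac{\xi^{m(l+\sigma)-(n+\nu)}}{(\xi^{m}-r)^{l+\sigma}}.
\]
Applying \eqref{eq:bclt 3para} again, now with parameters $(m,\,n+\nu,\,l+\sigma)$, identifies this as the transform of $x^{n+\nu-1}\mathbb{E}^{l+\sigma}_{m,n+\nu}(rx^{m})$. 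This last step uses that $|\operatorname{Im_j}(n+\nu)|<\operatorname{Re}(n+\nu)$, which follows from the triangle inequality and the hypotheses on $n$ and $\nu$.

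\textbf{Main obstacle.} The step I expect to be hardest is justifying uniqueness (inversion) of the bicomplex Laplace transform and the exponential-order hypotheses. Both reduce componentwise: in idempotent form the bicomplex transform splits as $\mathcal{L}(f_1;\xi_1)e_1+\mathcal{L}(f_2;\xi_2)e_2$, so uniqueness follows from the classical Lerch theorem applied to each continuous component.

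\textbf{Idempotent route.} To avoid these analytic issues entirely, I would alternatively argue directly. Write $m=m_1e_1+m_2e_2$, and similarly for the other parameters. The conditions $|\operatorname{Im_j}(\cdot)|<\operatorname{Re}(\cdot)$ are exactly $\operatorname{Re}(m_i),\operatorname{Re}(n_i),\operatorname{Re}(\nu_i)>0$. Since $x,t$ are real and $e_1e_2=0$, the integrand splits as a sum of the two complex integrands times $e_1$ and $e_2$, and the integral splits accordingly. Each component is the classical identity
\[
\int_0^x (x-t)^{n_i-1}E^{l_i}_{m_i,n_i}\big(r_i(x-t)^{m_i}\big)\,t^{\nu_i-1}E^{\sigma_i}_{m_i,\nu_i}\big(r_it^{m_i}\big)\,dt=x^{n_i+\nu_i-1}E^{l_i+\sigma_i}_{m_i,n_i+\nu_i}\big(r_ix^{m_i}\big).
\]
This identity can be cited from \cite{kilbas2004generalized}, or proved by termwise integration of the two series via the Beta integral. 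Termwise integration is justified by absolute convergence, and the resulting coefficients collapse by the Vandermonde identity $\sum_{k+j=u}\frac{(l_i)_k(\sigma_i)_j}{k!\,j!}=\frac{(l_i+\sigma_i)_u}{u!}$. Recombining with $e_1,e_2$ gives \eqref{eq: result}.
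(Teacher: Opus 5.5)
Your proposal is correct, and your main Laplace route is essentially the paper's proof. The paper first splits the integral into idempotent components and then applies the Laplace convolution theorem, the complex Prabhakar transform and inversion on each component; this is equivalent to your bicomplex-level argument because the bicomplex transform acts componentwise. Your alternative argument by termwise Beta integration and the Vandermonde identity does not appear in the paper and is the more self-contained proof, since it avoids the exponential-order and Lerch-uniqueness issues you flagged.
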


\begin{proof}	
	By using idempotent representation, we have
	\begin{equation}\label{eq:idempotent}
\begin{split}
&	\int_0^x (x - t)^{n - 1} \mathbb{E}_{m,n}^{l}\big(r(x - t)^{m}\big) \, t^{\nu - 1} \mathbb{E}_{m,\nu}^{\sigma}\big(r t^{m}\big) \, dt \\&=	\int_0^x (x - t)^{n_1 - 1} \mathbb{E}_{m_1,n_1}^{l_1}\big(r_1(x - t)^{m_1}\big) \, t^{\nu_1 - 1} \mathbb{E}_{m_1,\nu_1}^{\sigma_1}\big(r_1 t^{m_1}\big) \, dt~ e_1\\
&~+	\int_0^x (x - t)^{n_2 - 1} \mathbb{E}_{m_2,n_2}^{l_2}\big(r_2(x - t)^{m_2}\big) \, t^{\nu_2 - 1} \mathbb{E}_{m_2,\nu_2}^{\sigma_2}\big(r_2 t^{m_2}\big) \, dt ~e_2.
\end{split}
	\end{equation}
     
 By using the Parseval theorem for the Laplace transform component-wise for both  idempotent components \cite{titchmarsh1937introduction,kilbas2004generalized} and Laplace transform of  Prabhakar function (see, e.g. \cite{gorenflo2014,kilbas2006}), we obtain for $|r_1 \xi_1^{-m_1}| < 1$		\begin{equation}\label{eq:lapla-ML}
		\begin{split}
		&	\mathcal{L}	\left(  \int_0^x (x - t)^{n_1 - 1} \mathbb{E}_{m_1,n_1}^{l_1}\big(r_1(x - t)^{m_1}\big) \, t^{\nu_1 - 1} \mathbb{E}_{m_1,\nu_1}^{\sigma_1}\big(r_1 t^{m_1}\big) \, dt;\xi_1\right)  \\
		&=\mathcal{L} \left( x^{n_1 - 1} \mathbb{E}^{l_1}_{m_1,n_1}(r_1 x^{m_1});\xi_1 \right) \cdot \mathcal{L} \left( x^{\nu_1 - 1} \mathbb{E}^{\sigma_1}_{m_1,\nu_1}(r_1 x^{m_1});\xi_1 \right)\\
		&=\xi_1^{-n_1} \left(1 - r_1 \xi_1^{-m_1} \right)^{-l_1} \cdot \xi_1^{-\nu_1} \left(1 - r_1 \xi_1^{-m_1} \right)^{-\sigma_1}\\
	&	= \xi_1^{-(n_1 + \nu_1)} \left(1 - r_1 \xi_1^{-m_1} \right)^{-(l_1 + \sigma_1)}.\\
		\end{split}
		\end{equation}			
\noindent The RHS of \eqref{eq:lapla-ML} is the Laplace transform of \( x^{n_1 + \nu_1 - 1} \mathbb{E}^{l_1 + \sigma_1}_{m_1, n_1 + \nu_1} (r_1 x^{m_1)} \), hence by taking the inverse Laplace transform, we obtain 
	\begin{equation}\label{eq: 1}
		\left[ \int_0^x (x - t)^{n_1 - 1} \mathbb{E}_{m_1,n_1}^{l_1}\big(r_1(x - t)^{m_1}\big) \, t^{\nu_1 - 1} \mathbb{E}_{m_1,\nu_1}^{\sigma_1}\big(r_1 t^{m_1}\big)  dt\right] =  x^{n_1 + \nu_1 - 1} \mathbb{E}^{l_1 + \sigma_1}_{m_1, n_1 + \nu_1} (r_1 x^{m_1}).
	\end{equation}
   	Similarly, we get for $|r_2 \xi_2^{-m_2}| < 1,$	
\begin{equation}\label{eq: 2}
	\left[ \int_0^x (x - t)^{n_2 - 1} \mathbb{E}_{m_2,n_2}^{l_2}\big(r_2(x - t)^{m_2}\big) \, t^{\nu_2 - 1} \mathbb{E}_{m_2,\nu_2}^{\sigma_2}\big(r_2 t^{m_2}\big)  dt\right] =  x^{n_2 + \nu_2 - 1} \mathbb{E}^{l_2 + \sigma_2}_{m_2, n_2 + \nu_2} (r_2 x^{m_2}).
	\end{equation}
	By using equations \eqref{eq: 1} and \eqref{eq: 2} in the equation \eqref{eq:idempotent}, we get
		\begin{equation}
	\begin{split}
	&	\int_0^x (x - t)^{n - 1} \mathbb{E}_{m,n}^{l}\big(r(x - t)^{m}\big) \, t^{\nu - 1} \mathbb{E}_{m,\nu}^{\sigma}\big(r t^{m}\big) \, dt \\
	&=x^{n_1 + \nu_1 - 1} \mathbb{E}^{l_1 + \sigma_1}_{m_1, n_1 + \nu_1} (r_1 x^{m_1})~ e_1+ x^{n_2 + \nu_2 - 1} \mathbb{E}^{l_2 + \sigma_2}_{m_2, n_2 + \nu_2} (r_2 x^{m_2})~e_2\\
	&= x^{n + \nu - 1} \mathbb{E}^{l + \sigma}_{m, n + \nu} (r x^{m}),
	\end{split}
	\end{equation}
where $|r \xi^{-m}|_j\prec1,  $  from \eqref{eq:plap}.
\end{proof}

\section{Bicomplex  Prabhakar fractional operators}
Motivated by the recent  works 
\cite{toksoy2024geometrical,toksoy2024hyperbolic,dubey2014note} following definitions  can be used component-wise for the bicomplex valued functions as follows:
\begin{definition}\label{def:lebesgur}
Let	the space $L(a, b)$ of Lebesgue measurable 	functions (see, e.g. \cite{kilbas2004generalized}) on a finite interval $[a, b]$ of the real line $\mathbb{R}$ is defined as 
	\begin{equation}
	L(a,b) =\left\lbrace f(x): ~  \parallel f \parallel _1=\int_{a}^{b}|f(x)|dx<\infty \right\rbrace .
	\end{equation}
       Then the corresponding  the space of bicomplex-valued functions $L_\mathbb{T}(a,b)$ comprises of all functions of the type 	\(f = f_1 e_1 + f_2 e_2\), where $f_1,f_2 \in L(a,b).$
\end{definition}
\begin{definition}
   The space $W^{m,1}(a,b)$  is the Sobolev
space (see, e.g. 	\cite{polito2016some}) defined as
\begin{equation}
    W^{m,1}(a,b)=\left\lbrace f\in L^1(a,b):\frac{d^m}{dt^m}f\in L^1(a,b)\right\rbrace. 
\end{equation}
 Then the corresponding  the space of bicomplex-valued functions $W_{\mathbb{T}}^{m,1}(a,b)$ comprises of all functions of the type 	\(f = f_1 e_1 + f_2 e_2\), where $f_1,f_2 \in W^{m,1}(a,b).$
\end{definition}

\begin{definition}
	Let the space  $AC^M(a,b)$ is (see, e.g. 	\cite{polito2016some})  the space of real-valued functions $f(t)$ with 
	continuous derivatives up to order $M-1$ on $(a,b)$ such that $f^{ (M-1)} (t)$ belongs to the
	space of absolutely continuous functions $AC(a,b)$. 
    Then the corresponding  the space of bicomplex-valued functions $AC_\mathbb{T}^M(a,b)$ comprises of all functions of the type 	\(f = f_1 e_1 + f_2 e_2\), where $f_1,f_2 \in AC^M(a,b).$
    \end{definition}


\noindent Following lemma plays a crucial role in proving the subsequent results. By establishing this lemma, we ensure that the subsequent theorems can be derived more systematically and with greater clarity.
\begin{lemma}
	If $m \in \mathbb{T}$ where $m=m_1e_1+m_2e_2$, if	$\operatorname{Re}(m_1)>0$ and  $\operatorname{Re}(m_2)  >0$ then  $|\operatorname{Im_j}(m)|<\operatorname{Re}(m).$ Also if $\operatorname{Re}(m_1)>k$ and  $\operatorname{Re}(m_2)  >k$ then   $|\operatorname{Im_j}(m)|<\operatorname{Re}(m)-k.$
	
\end{lemma}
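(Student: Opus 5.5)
We work directly from the idempotent representation. Write $m=x_0+i_1x_1+i_2x_2+jx_3$, so that $\operatorname{Re}(m)=x_0$ and $\operatorname{Im_j}(m)=x_3$. Since $m=z_1+i_2z_2$ with $z_1=x_0+i_1x_1$ and $z_2=x_2+i_1x_3$, the definitions $m_1=z_1-i_1z_2$ and $m_2=z_1+i_1z_2$ give
\begin{equation*}
m_1=(x_0+x_3)+i_1(x_1-x_2),\qquad m_2=(x_0-x_3)+i_1(x_1+x_2).
\end{equation*}
Hence $\operatorname{Re}(m_1)=\operatorname{Re}(m)+\operatorname{Im_j}(m)$ and $\operatorname{Re}(m_2)=\operatorname{Re}(m)-\operatorname{Im_j}(m)$. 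This real-part identity is the key step. The only place requiring care is the sign convention $e_1=(1+j)/2$, $e_2=(1-j)/2$, and with this convention the computation above is consistent.

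For the first assertion, the hypotheses $\operatorname{Re}(m_1)>0$ and $\operatorname{Re}(m_2)>0$ become $x_0+x_3>0$ and $x_0-x_3>0$. These two inequalities say $-x_0<x_3<x_0$, that is, $|x_3|<x_0$, which is exactly $|\operatorname{Im_j}(m)|<\operatorname{Re}(m)$.

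For the second assertion, the same identity turns $\operatorname{Re}(m_1)>k$ and $\operatorname{Re}(m_2)>k$ into $x_0+x_3>k$ and $x_0-x_3>k$. Thus $x_3>k-x_0$ and $x_3<x_0-k$, so $|x_3|<x_0-k$, i.e. $|\operatorname{Im_j}(m)|<\operatorname{Re}(m)-k$.

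The argument needs no earlier results beyond the idempotent representation \eqref{eq:idempotent def}. The converse also holds, so in fact the hypotheses are equivalent to the conclusions. There is no real obstacle; the whole proof reduces to the short linear computation of $\operatorname{Re}(m_1)$ and $\operatorname{Re}(m_2)$.
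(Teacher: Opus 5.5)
Your proof is correct and takes the same approach as the paper. Both compute $\operatorname{Re}(m_1)=\operatorname{Re}(m)+\operatorname{Im_j}(m)$ and $\operatorname{Re}(m_2)=\operatorname{Re}(m)-\operatorname{Im_j}(m)$ from the idempotent representation, then read off $|\operatorname{Im_j}(m)|<\operatorname{Re}(m)$ (resp.\ $<\operatorname{Re}(m)-k$) from the two linear inequalities; your remarks that the converse holds and that $k$ may be any real number (the paper takes $k\in\mathbb{N}$) are also correct.
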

\begin{proof}
	
	Let $m \in \mathbb{T}$ in terms of real components, we have
	\begin{equation}
	\begin{split}
	m&= p_0+i_1p_1+ i_2p_2+i_1i_2p_3\\
	&=(p_0+i_1p_1) +i_2( p_2+i_1p_3).\\
	\end{split}
	\end{equation}
    By the definition of idempotent representation give in the equation \eqref{eq:idempotent def}, we get
    \begin{equation}
        m=m_1e_1+m_2e_2,
    \end{equation}
	where $m_1= (p_0+p_3) +i_1(p_1-p_2),~m_2= (p_0-p_3) +i_1(p_1+p_2)$
 and
    $\operatorname{Re}(m_1)= p_0+p_3, ~\operatorname{Re}(m_2)= p_0-p_3. $ \\
    Also, $p_0=\dfrac{\operatorname{Re}(m_1)+\operatorname{Re}(m_2)}{2},~p_3=\dfrac{\operatorname{Re}(m_1)-\operatorname{Re}(m_2)}{2}$\\

	\noindent Since
	$\operatorname{Re}(m_1)>0$ and  $\operatorname{Re}(m_2)  >0$
	\begin{eqnarray}
	&\Rightarrow & p_0+p_3 >0 ~\text{and}~ p_0-p_3 >0.\\ \notag 
    &\Rightarrow & p_0 >-p_3 ~\text{and}~ p_0>p_3.\\ \notag 
     &\Rightarrow &-p_3 < p_0 ~\text{and}~ p_3< p_0 .\\ \notag 	
	&	\Rightarrow & ~ |p_3|<p_0.\label{eq:re(alpha)g0}\\ \notag 
	&\Rightarrow&|\operatorname{Im_j}(m)|<\operatorname{Re}(m).
	\end{eqnarray}
	
\noindent Also, if 
	$\operatorname{Re}(m_1)>k$ and  $\operatorname{Re}(m_2)  >k,$ where $k\in \mathbb{N}$
	\begin{eqnarray}
	&\Rightarrow & p_0+p_3 >k ~\text{and}~ p_0-p_3 >k.\\ \notag     
	&\Rightarrow & p_0-k >-p_3 ~\text{and}~ p_0-k >p_3.\\ \notag 
	&	\Rightarrow& ~ |p_3|<p_0-k.\label{eq:re(alpha)g01}\\ \notag 
	&\Rightarrow&|\operatorname{Im_j}(m)|<\operatorname{Re}(m)-k.
	\end{eqnarray}

\end{proof}

\subsection{Bicomplex Prabhakar kernel}
The Prabhakar kernel is a generalized kernel function defined through the Prabhakar function. It forms the basis of the Prabhakar fractional integral and derivative operators.
In order to discuss bicomplex Prabhakar integral, we define 
the Prabhakar kernel in bicomplex space. 
\begin{theorem}
		Let   $t\in \mathbb{R},~m,n,r,l \in \mathbb{T},~m= m_1e_1+m_2e_2,  ~ n=n_1e_1+n_2e_2,~ r=r_1e_1+r_2e_2,~l=l_1e_1+l_2e_2$ and $|\operatorname{Im_j}(m)|<\operatorname{Re}(m),~|\operatorname{Im_j}(n)|<\operatorname{Re}(n).$ Then   bicomplex Prabhakar kernel involving bicomplex Prabhakar function \eqref{eq: bc ml 3 para} is defined as 
	\begin{equation}\label{eq: be p kernel}
	e_{m,n,r}^{l}(t)=t^{n-1}(\mathbb{E}_{m,n}^l)(r t^{m}).
	\end{equation}
	
\end{theorem}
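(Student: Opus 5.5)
The statement is really a definition dressed as a theorem. What has to be shown is that the right-hand side of \eqref{eq: be p kernel} is a well-defined bicomplex-valued function of $t>0$, and that it splits componentwise into the classical Prabhakar kernels. Concretely, I would prove
\[
e_{m,n,r}^{l}(t)=t^{n_1-1}\mathbb{E}_{m_1,n_1}^{l_1}(r_1t^{m_1})\,e_1+t^{n_2-1}\mathbb{E}_{m_2,n_2}^{l_2}(r_2t^{m_2})\,e_2 ,
\]
with each component the usual complex Prabhakar kernel. This decomposition is the form in which the kernel will be used later in the integral operators.

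The plan has three steps.

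\emph{Step 1: the powers.} For real $t>0$ and $n=n_1e_1+n_2e_2$, I define $t^{n}=e^{n\log t}$. Since $e_1,e_2$ are orthogonal idempotents, the exponential series splits, so $e^{n\log t}=e^{n_1\log t}e_1+e^{n_2\log t}e_2$. This gives $t^{n-1}=t^{n_1-1}e_1+t^{n_2-1}e_2$, and in the same way $rt^{m}=r_1t^{m_1}e_1+r_2t^{m_2}e_2$.

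\emph{Step 2: the series.} For the bicomplex Prabhakar function \eqref{eq: bc ml 3 para}, I use that both the Pochhammer symbol and $\Gamma_2$ act componentwise. Indeed, $\Gamma_2(\zeta)=\Gamma(\zeta_1)e_1+\Gamma(\zeta_2)e_2$ by Goyal's definition, and reciprocals are taken componentwise whenever both components are nonzero. This gives $\mathbb{E}_{m,n}^{l}(\zeta)=\mathbb{E}_{m_1,n_1}^{l_1}(\zeta_1)e_1+\mathbb{E}_{m_2,n_2}^{l_2}(\zeta_2)e_2$.

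\emph{Step 3: the product.} Multiplying the two decompositions and using $e_1e_2=0$ and $e_i^2=e_i$ yields the displayed formula.

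The main obstacle is convergence, which is where the hypotheses $|\operatorname{Im_j}(m)|<\operatorname{Re}(m)$ and $|\operatorname{Im_j}(n)|<\operatorname{Re}(n)$ enter. I would argue as in the Lemma on the correspondence $\operatorname{Re}(m_i)>0 \iff |\operatorname{Im_j}(m)|<\operatorname{Re}(m)$, but run in reverse. Writing $\operatorname{Re}(m_{1,2})=p_0\pm p_3$, the condition $|p_3|<p_0$ gives $\operatorname{Re}(m_1)>0$ and $\operatorname{Re}(m_2)>0$, and similarly for $n$. Under these conditions each complex Prabhakar series is entire in its argument: the ratio test gives terms behaving like $u^{\operatorname{Re}(l_i)-1}/\Gamma(\operatorname{Re}(m_i)u)$ asymptotically. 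Hence both components converge absolutely for every $t>0$, and $e_{m,n,r}^{l}$ is well defined, with no restriction needed on $r$ or $l$. The one point to check is that $\Gamma_2(mu+n)$ is never a zero divisor obstructing the reciprocal. I would handle this by observing that $1/\Gamma$ is entire, so each component $1/\Gamma(m_iu+n_i)$ makes sense even at poles of $\Gamma$, with value $0$.
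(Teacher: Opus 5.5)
Your argument is correct and follows the paper's proof. Like the paper, you split the kernel via the idempotent representation into the two complex Prabhakar kernels $t^{n_i-1}\mathbb{E}_{m_i,n_i}^{l_i}(r_it^{m_i})$, note that each exists when $\operatorname{Re}(m_i),\operatorname{Re}(n_i)>0$, and recombine; you also make explicit two things the paper leaves to citations and Ringleb's theorem, namely the converse of the Lemma ($|\operatorname{Im_j}(m)|<\operatorname{Re}(m)\Rightarrow\operatorname{Re}(m_1),\operatorname{Re}(m_2)>0$) and the convergence of the component series. One small imprecision: $|\Gamma(m_iu+n_i)|$ is not asymptotic to $\Gamma(\operatorname{Re}(m_i)u)$, but the ratio of consecutive terms is $O(u^{-\operatorname{Re}(m_i)})\to 0$, so each component series is still entire and your conclusion stands.
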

\begin{proof}
	
Consider
	\begin{equation}
e_{m,n,r}^{l}(t)=t^{n-1}(\mathbb{E}_{m,n}^l)(r t^{m}).
\end{equation}
By using idempotent representation
\begin{equation}
\begin{split}
e_{m,n,r}^{l}(t)
&=t^{n_1-1}(\mathbb{E}_{m_1,n_1}^{l_1})(r_1 t^{m_1}) e_1+t^{n_2-1}(\mathbb{E}_{m_2,n_2}^{l_2})(r_2 t^{m_2})e_2\\
&=e_{m_1,n_1,r_1}^{l_1}(t)e_1+e_{m_2,n_2,r_2}^{l_2}(t)e_2.\\
\end{split}
\end{equation}
Now,
\begin{equation}\label{eq:newfun1}
e_{m_1,n_1,r_1}^{l_1}(t)=t^{n_1-1}(\mathbb{E}_{m_1,n_1}^{l_1})(r_1 t^{m_1}),
\end{equation}
is the  complex Prabhakar kernel  exists for $\operatorname{Re}(m_1)>0,~\operatorname{Re}(n_1)>0$ \cite{polito2016some,garra2014hilfer}.  \\

\noindent Similarly,
\begin{equation}\label{eq:newfun2}
e_{m_2,n_2,r_2}^{l_2}(t)=t^{n_2-1}(\mathbb{E}_{m_2,n_2}^{l_2})(r_2 t^{m_2}),
\end{equation}
is also  the  complex Prabhakar kernel  exists for $\operatorname{Re}(m_2)>0,~\operatorname{Re}(n_2)>0$ (see, for e.g. \cite{polito2016some,garra2014hilfer}).

	Since $e_{m_1,n_1,r_1}^{l_1}(t)$ and $e_{m_2,n_2,r_2}^{l_2}(t)$ are exist in $T_1$ and $ T_2$ respectively, by the Ringleb
decomposition theorem  (see, for e.g. \cite{ringlab1933}),  \eqref{eq: be p kernel} is also exists in $T$ for  $|\operatorname{Im_j}(m)|<\operatorname{Re}(m),~|\operatorname{Im_j}(n)|<\operatorname{Re}(n).$\\
\end{proof}


\begin{theorem}
	Let $f\in L_{\mathbb{T}}(a,b),~0\le a<t<b\le\infty$  is bicomplex valued function and  $m,n,r,l \in \mathbb{T},~m= m_1e_1+m_2e_2,  ~ n=n_1e_1+n_2e_2,~ r=r_1e_1+r_2e_2,~l=l_1e_1+l_2e_2$ and $|\operatorname{Im_j}(m)|<\operatorname{Re}(m),~|\operatorname{Im_j}(n)|<\operatorname{Re}(n).$ Then   bicomplex Prabhakar integral  is defined as 
\begin{equation}\label{eq:bc p int1}	(\mathbb{E}_{m,n,r,a^+}^l)f(t)=(f*e_{m,n,r}^l)(t)=\int_{\varUpsilon}(t-y)^{n-1}\mathbb{E}_{m,n}^l[{r(t-y)^m}]f(y)dy,
	\end{equation}
	where $\varUpsilon =(\gamma_1, \gamma_2)$ is a piecewise smooth  curve in $\mathbb{T}$ and 	$\gamma_1:a $ to $t$ and $\gamma_2:a $ to $t$ are piecewise smooth curves in $\mathbb{C}.$
	\end{theorem}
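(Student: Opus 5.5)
The statement is really a well-definedness claim: the right-hand side of \eqref{eq:bc p int1} has to exist as a bicomplex-valued function for almost every $t\in(a,b)$. I would prove it the same way as the preceding theorem on the bicomplex Prabhakar kernel, by reducing to the two idempotent components and invoking the classical complex theory.

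\textbf{Step 1: split the data into idempotent components.} First I rewrite the hypotheses componentwise. Since $f\in L_{\mathbb{T}}(a,b)$, Definition \ref{def:lebesgur} gives $f=f_1e_1+f_2e_2$ with $f_1,f_2\in L(a,b)$.

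From $|\operatorname{Im_j}(m)|<\operatorname{Re}(m)$ I get $|p_3|<p_0$ in the notation of the preceding lemma. Hence $\operatorname{Re}(m_1)=p_0+p_3>0$ and $\operatorname{Re}(m_2)=p_0-p_3>0$; this is the converse direction of that lemma, and it is immediate from the same formulas. The same argument gives $\operatorname{Re}(n_1)>0$ and $\operatorname{Re}(n_2)>0$.

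By the kernel theorem, $e_{m,n,r}^{l}=e_{m_1,n_1,r_1}^{l_1}e_1+e_{m_2,n_2,r_2}^{l_2}e_2$.

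\textbf{Step 2: decompose the integral along $\varUpsilon$.} Because $e_1e_2=0$ and $e_i^2=e_i$, the product kernel$\cdot f$ splits into two pieces. Integrating along $\varUpsilon=(\gamma_1,\gamma_2)$ means integrating the $e_1$-part along $\gamma_1$ and the $e_2$-part along $\gamma_2$. Both curves run from $a$ to $t$ on the real segment, so
\[
(\mathbb{E}_{m,n,r,a^+}^l)f(t)=\Big(\int_a^t e_{m_1,n_1,r_1}^{l_1}(t-y)f_1(y)\,dy\Big)e_1+\Big(\int_a^t e_{m_2,n_2,r_2}^{l_2}(t-y)f_2(y)\,dy\Big)e_2 .
\]

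\textbf{Step 3: existence of each component.} Each bracket is the classical complex Prabhakar integral of \cite{kilbas2004generalized,polito2016some}, and it exists for $\operatorname{Re}(m_i)>0$, $\operatorname{Re}(n_i)>0$ and $f_i\in L(a,b)$. Concretely:
\begin{itemize}
\item The Prabhakar function is entire, so it is bounded on compact sets.
\item Hence $|e^{l_i}_{m_i,n_i,r_i}(s)|\le C\,s^{\operatorname{Re}(n_i)-1}$ on $(0,b-a]$.
\item This bound is integrable because $\operatorname{Re}(n_i)>0$, so the kernel lies in $L(0,b-a)$.
\item Young's inequality $\|k*f_i\|_1\le\|k\|_1\|f_i\|_1$ then shows that each component integral converges absolutely for almost every $t$ and lies in $L(a,b)$.
\end{itemize}

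\textbf{Step 4: recombine.} Both idempotent components exist, so the Ringleb decomposition theorem \cite{ringlab1933} gives that \eqref{eq:bc p int1} exists in $\mathbb{T}$ and belongs to $L_{\mathbb{T}}(a,b)$.

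\textbf{Main obstacle.} The delicate point is the case $b=\infty$. There the kernel bound only holds locally, so I would argue on each finite interval $(a,t)$, where the integral is taken anyway. A smaller point is justifying that a curve integral along $\varUpsilon$ reduces to ordinary real-line integrals in each component; I would treat this as the definition of the bicomplex line integral via projections $P_1,P_2$.
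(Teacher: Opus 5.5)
Your proposal is correct and follows essentially the same route as the paper: split $f$ and the kernel into idempotent components, integrate the $e_1$- and $e_2$-parts along $\gamma_1$ and $\gamma_2$, identify each piece with the classical complex Prabhakar integral, and recombine via the Ringleb decomposition theorem. Your extra details fill in what the paper leaves to its citations: you derive $\operatorname{Re}(m_i)>0$ and $\operatorname{Re}(n_i)>0$ from the hypotheses (the converse of the preceding lemma, which the paper uses only implicitly), and you give the explicit kernel bound with Young's inequality. One caveat: for $b=\infty$ your local argument yields existence almost everywhere but not membership in $L_{\mathbb{T}}(a,\infty)$, which the statement does not require anyway.
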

\begin{proof} Consider
	\begin{equation}	(\mathbb{E}_{m,n,r,a^+}^l)f(t)=(f*e_{m,n,r}^l)(t)=\int_{\varUpsilon}(t-y)^{n-1}\mathbb{E}_{m,n}^l[{r(t-y)^m}]f(y)dy.
	\end{equation}
    By using idempotent representation
	\begin{equation}
	\begin{split}		
(\mathbb{E}_{m,n,r,a^+}^l)f(t)=&(f*e_{m,n,r}^l)(t)\\
	=&\int_{\varUpsilon}(t-y)^{n-1}\mathbb{E}_{m,n}^l[{r(t-y)^m}]f(y)dy\\
	=&\int_{\gamma_1}(t-y)^{n_1-1}\mathbb{E}_{m_1,n_1}^{l_1}[{r_1(t-y)^{m_1}}]f_1(y)dy~e_1\\
	&+\int_{\gamma_2}(t-y)^{n_2-1}\mathbb{E}_{m_2,n_2}^{l_2}[{r_2(t-y)^{m_2}}]f_2(y)dy~e_2\\
	=&(f_1*e_{m_1,n_1,r_1}^{l_1})(t)~e_1+(f_2*e_{m_2,n_2,r_2}^{l_2})(t)~e_2\\=&	(\mathbb{E}_{m_1,n_1,r_1,a^+}^{l_1})f_1(t)~e_1+(\mathbb{E}_{m_2,n_2,r_2,a^+}^{l_2})f_2(t)~e_2.\label{eq:bc p int idem}
	\end{split}
	\end{equation}
Now, 	\begin{equation}	(\mathbb{E}_{m_1,n_1,r_1,a^+}^{l_1})f_1(t)=(f_1*e_{m_1,n_1,r_1}^{l_1})(t)=\int_{\gamma_1}(t-y)^{n_1-1}\mathbb{E}_{m_1,n_1}^{l_1}[{r_1(t-y)^{m_1}}]f_1(y)dy,
	\end{equation}
	is the complex Prabhakar integral \cite{polito2016some,garra2014hilfer} exists for $\operatorname{Re}(m_1)>0,~\operatorname{Re}(n_1)>0.$ \\
	Similarly,
	\begin{equation}	(\mathbb{E}_{m_2,n_2,r_2,a^+}^{l_2})f_2(t)=(f_2*e_{m_2,n_2,r_2}^{l_2})(t)=\int_{\gamma_2}(t-y)^{n_2-1}\mathbb{E}_{m_2,n_2}^{l_2}[{r_2(t-y)^{m_2}}]f_2(y)dy,
	\end{equation}
	is also complex Prabhakar integral exists for $\operatorname{Re}(m_2)>0,~\operatorname{Re}(n_2)>0.$ \\
	Since $ (\mathbb{E}_{m_1,n_1,r_1,a^+}^{l_1})f_1(t)$ and $(\mathbb{E}_{m_2,n_2,r_2,a^+}^{l_2})f_2(t)$ are exist in $T_1$ and $ T_2$ respectively, by the Ringleb
	decomposition theorem,  \eqref{eq:bc p int1} is also exists in $T$ for $|\operatorname{Im_j}(m)|<\operatorname{Re}(m),~|\operatorname{Im_j}(n)|<\operatorname{Re}(n).$	
\end{proof}

\noindent Following result shows that the operator $\mathbb{E}_{m,n,r,a^+}^l$  is bounded on the space $L_{\mathbb{T}}(a,b).$ 

If the domain of definition itself is a curve in the complex plane, say $f : \Gamma \to \mathbb{C}$, where $\Gamma$ is a contour (path) in $\mathbb{C}$, then one can define an 
$L^p$ norm \emph{along that contour}.

We parametrize the contour by $z = \gamma(t), \quad t \in [a,b]$, and define
\[\parallel f\parallel _{L^p(\Gamma)} = 
\left( \int_a^b |f(\gamma(t))|^p \, |\gamma'(t)| \, dt \right)^{1/p}.\]
Here, $|\gamma'(t)|\,dt$ represents the \emph{arc-length measure} along the contour $\Gamma$.  
This is analogous to integrating along a path, but the $L^p$ norm depends only on the modulus $|f|$, not on the direction or orientation of the contour.

\begin{theorem}\label{th:bounded}
	Let $m,n,r,l \in \mathbb{T},~m= m_1e_1+m_2e_2,  ~ n=n_1e_1+n_2e_2,~ r=r_1e_1+r_2e_2,l=l_1e_1+l_2e_2$ and $|\operatorname{Im_j}(m)|<\operatorname{Re}(m),~|\operatorname{Im_j}(n)|<\operatorname{Re}(n)$  and $b > a$, then the operator 
$	(\mathbb{E}_{m,n,r,a^+}^l)$ 	is bounded on $L_{\mathbb{T}}(a, b)$ 

\begin{equation}\label{eq:bound}
\int_{\varUpsilon}\left| \mathbb{E}_{m,n,r,a^+}^lf(t)\right|_jdt \prec K\int_{\varUpsilon}\left|f(y)\right|_jdy,\quad f \in L_{\mathbb{T}}(a,b),
\end{equation}
  	where $\varUpsilon =(\gamma_1, \gamma_2)$, represents a pair of paths in \(\mathbb{T}\), with \(\gamma_1\) and \(\gamma_2\) being piecewise smooth paths in \(\mathbb{C}\) from \(a\) to \(t\).  $ K=K_1e_1+K_2e_2,$ and

$\displaystyle K_i=(b-a)^{\operatorname{Re}(n_i)}\sum_{k=0}^{\infty}\tfrac{|(l_i)_k|}{|\Gamma_2(m_i k+n_i )|[\operatorname{Re}(m_i) k+\operatorname{Re}(n_i)]}\frac{|r_i (b-a)^{\operatorname{Re}(m_i)}|^k}{k!}\quad  (i=1,2).$
\end{theorem}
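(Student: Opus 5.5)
The plan is to reduce everything to the two complex components through the idempotent representation, as in the earlier results of this section. Write $f=f_1e_1+f_2e_2$ with $f_1,f_2\in L(a,b)$. By the idempotent form of $\mathbb{E}_{m,n,r,a^+}^l f$ obtained in the proof of the preceding theorem,
\[
\mathbb{E}_{m,n,r,a^+}^l f(t)=(\mathbb{E}_{m_1,n_1,r_1,a^+}^{l_1})f_1(t)\,e_1+(\mathbb{E}_{m_2,n_2,r_2,a^+}^{l_2})f_2(t)\,e_2 .
\]
Since the $j$-modulus acts componentwise, $|\xi|_j=|\xi_1|e_1+|\xi_2|e_2$, the left side of \eqref{eq:bound} is $\int_{\gamma_1}|\mathbb{E}^{l_1}_{m_1,n_1,r_1,a^+}f_1|\,dt\,e_1+\int_{\gamma_2}|\mathbb{E}^{l_2}_{m_2,n_2,r_2,a^+}f_2|\,dt\,e_2$. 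By Definition~\ref{def:bicomplex_order}, it therefore suffices to prove the scalar estimates
\[
\int_a^b\bigl|\mathbb{E}^{l_i}_{m_i,n_i,r_i,a^+}f_i(t)\bigr|\,dt< K_i\int_a^b|f_i(y)|\,dy,\qquad i=1,2,
\]
with the paths taken as the real segment from $a$ to $b$. Strict inequality needs $f_i\neq0$; otherwise both sides vanish, and I would read $\prec$ as allowing equality there.

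For each $i$, the hypothesis $|\operatorname{Im_j}(m)|<\operatorname{Re}(m)$, by the converse direction of the preceding Lemma (which follows from $\operatorname{Re}(m_{1,2})=p_0\pm p_3$), gives $\operatorname{Re}(m_i)>0$ and $\operatorname{Re}(n_i)>0$. I would then follow Kilbas--Saigo--Saxena. Apply Tonelli to swap the order of integration:
\[
\int_a^b\Bigl|\int_a^t (t-y)^{n_i-1}\mathbb{E}^{l_i}_{m_i,n_i}(r_i(t-y)^{m_i})f_i(y)\,dy\Bigr|dt\le\int_a^b|f_i(y)|\int_y^b\bigl|(t-y)^{n_i-1}\mathbb{E}^{l_i}_{m_i,n_i}(r_i(t-y)^{m_i})\bigr|\,dt\,dy .
\]
Substitute $u=t-y\in(0,b-y)\subset(0,b-a)$ and expand the Prabhakar series termwise. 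The term bound is $|u^{n_i-1+m_ik}|=u^{\operatorname{Re}(n_i)-1+\operatorname{Re}(m_i)k}$, which gives
\[
\int_0^{b-a}\bigl|u^{n_i-1}\mathbb{E}^{l_i}_{m_i,n_i}(r_iu^{m_i})\bigr|du\le\sum_{k\ge0}\frac{|(l_i)_k|\,|r_i|^k}{k!\,|\Gamma(m_ik+n_i)|}\cdot\frac{(b-a)^{\operatorname{Re}(m_i)k+\operatorname{Re}(n_i)}}{\operatorname{Re}(m_i)k+\operatorname{Re}(n_i)}=K_i .
\]
The $\Gamma_2$ in $K_i$ is read componentwise as the complex Gamma function.

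The main obstacle is justifying the interchange of sum and integral together with the convergence of the series $K_i$. I would handle both at once: the series has nonnegative terms, so monotone convergence allows termwise integration. Convergence of $K_i$ then follows from Stirling's formula: $|(l_i)_k|/k!$ grows at most polynomially in $k$, while $1/|\Gamma(m_ik+n_i)|$ decays superexponentially because $\operatorname{Re}(m_i)>0$. The ratio test, or the entire-function property of the Prabhakar function, settles it. A minor point is that the stated power $|r_i(b-a)^{\operatorname{Re}(m_i)}|^k$ matches my bound $|r_i|^k(b-a)^{\operatorname{Re}(m_i)k}$, since $b-a>0$. Finally, I would recombine the two components with $e_1,e_2$ to obtain \eqref{eq:bound} with $K=K_1e_1+K_2e_2$, invoking the Ringleb decomposition as in the earlier theorems.
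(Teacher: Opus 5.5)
Your proposal is correct and follows the paper's proof. Both split the estimate into idempotent components, apply the Kilbas--Saigo--Saxena $L^1$-bound to each complex component, and recombine with $e_1,e_2$. The only difference is that the paper simply cites that bound from p.~40 of their 2004 paper, whereas you re-derive it via Tonelli, the substitution $u=t-y$ and termwise integration; this also shows $K_i<\infty$ for all parameters.

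Your caveat about $\prec$ is well taken. The cited complex estimate is only a non-strict inequality, and both sides vanish when some $f_i=0$, so the statement has to be read with the non-strict componentwise order.
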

\begin{proof}
The result can be obtained by breaking up LHS of \eqref{eq:bound} into idempotent components and then using the result \cite[p.40]{kilbas2004generalized} for
complex domain therein. Combining the idempotent components again, we get the desired
 result in the bicomplex domain.

The constant $K$ depends on $b-a$, the parameters $m, n, l, r$,  
and converges for $|r(b-a)^{Re(m)}| <$ radius determined by the Mittag-Leffler function.  
For small intervals or appropriate parameter choices, $K$ can be made arbitrarily small.
\end{proof}
\noindent The Prabhakar derivative (both in the Riemann–Liouville sense and in the regularized Caputo-type sense) plays a central role in extending fractional calculus. Now we introduce the bicomplex Prabhakar derivative as follows:
\begin{theorem}
	Let $f\in L_{\mathbb{T}}(a,b),~0\le a<t<b\le\infty$ is bicomplex valued function and $ f*e^{-l}_{m,k-n,r}(.)\in W_{\mathbb{T}}^{k,1}(a,b),~k=\lceil n \rceil=\lceil\operatorname{Re} (n_1)\rceil e_1+\lceil \operatorname{Re}(n_2)\rceil e_2\in \mathbb{D}.$ The  bicomplex Prabhakar derivative  is defined as 
	\begin{equation}\label{eq:bc prabhakar der}
	(\mathbb{D}_{m,n,r,a^+}^l f)(t)=\left(\frac{d^k}{dt^k}(\mathbb{E}_{m,k-n,r,a^+}^{-l}f) \right) (t),
	\end{equation}
	where $m,n,r,l \in \mathbb{T},~m= m_1e_1+m_2e_2,  ~ n=n_1e_1+n_2e_2,~ r=r_1e_1+r_2e_2,l=l_1e_1+l_2e_2$ and $|\operatorname{Im_j}(m)|<\operatorname{Re}(m),~|\operatorname{Im_j}(n)|<\operatorname{Re}(n).$
	
\end{theorem}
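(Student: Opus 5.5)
The statement is a well-definedness claim: the right-hand side of \eqref{eq:bc prabhakar der} makes sense as a bicomplex-valued function. The plan is to split it into idempotent components, use the classical complex Prabhakar derivative on each, and reassemble the result, just as for the bicomplex Prabhakar kernel and integral above.

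First, write $f=f_1e_1+f_2e_2$ with $f_1,f_2\in L(a,b)$, as in Definition~\ref{def:lebesgur}. Set $k=k_1e_1+k_2e_2$ with $k_i=\lceil\operatorname{Re}(n_i)\rceil$. The hypothesis $|\operatorname{Im_j}(n)|<\operatorname{Re}(n)$ is equivalent to $\operatorname{Re}(n_1)>0$ and $\operatorname{Re}(n_2)>0$ (this uses $\operatorname{Re}(n_{1,2})=p_0\pm p_3$ from the preceding Lemma). Hence each $k_i$ is a positive integer with $k_i-1<\operatorname{Re}(n_i)\le k_i$, and therefore
\[
\operatorname{Re}(k_i-n_i)\ge 0 .
\]
If $\operatorname{Re}(k_i-n_i)>0$, the parameter $k-n$ satisfies the hypotheses of the bicomplex Prabhakar integral theorem. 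In that case
\[
(\mathbb{E}_{m,k-n,r,a^+}^{-l}f)(t)=(\mathbb{E}_{m_1,k_1-n_1,r_1,a^+}^{-l_1}f_1)(t)\,e_1+(\mathbb{E}_{m_2,k_2-n_2,r_2,a^+}^{-l_2}f_2)(t)\,e_2,
\]
as in \eqref{eq:bc p int idem}. Theorem~\ref{th:bounded} shows that each component lies in $L(a,b)$. The borderline integer case $\operatorname{Re}(n_i)=k_i$ has to be treated by the convention that the kernel with parameter $0$ acts as the identity, as in the complex literature.

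Next, interpret $\frac{d^k}{dt^k}$ with $k\in\mathbb{D}$ componentwise, as $\frac{d^{k_1}}{dt^{k_1}}e_1+\frac{d^{k_2}}{dt^{k_2}}e_2$. This is consistent because $e_1e_2=0$ and $e_i^2=e_i$. The hypothesis $f*e^{-l}_{m,k-n,r}\in W^{k,1}_{\mathbb{T}}(a,b)$ means that each component $f_i*e^{-l_i}_{m_i,k_i-n_i,r_i}$ belongs to $W^{k_i,1}(a,b)$. Then
\[
(\mathbb{D}_{m,n,r,a^+}^lf)(t)=(\mathbb{D}_{m_1,n_1,r_1,a^+}^{l_1}f_1)(t)\,e_1+(\mathbb{D}_{m_2,n_2,r_2,a^+}^{l_2}f_2)(t)\,e_2,
\]
and each summand is the classical complex Prabhakar derivative, which exists a.e. by \cite{polito2016some,garra2014hilfer}. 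Finally, the Ringleb decomposition theorem, used exactly as for \eqref{eq: be p kernel} and \eqref{eq:bc p int1}, shows that the combination defines a bicomplex-valued function. This establishes \eqref{eq:bc prabhakar der}.

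The main obstacle is making sense of a derivative of bicomplex order $k\in\mathbb{D}$ when $k_1\ne k_2$, and of the Sobolev membership in that case. I would handle both by the componentwise definition above and check that it agrees with the ordinary $k$-th derivative when $k_1=k_2$. A secondary point is the integer boundary case of $\operatorname{Re}(n_i)$, which I would dispose of by the identity-kernel convention stated in the first step.
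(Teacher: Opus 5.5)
Your proposal follows the paper's proof: split $f$ and the parameters into idempotent components, identify each component with the complex Prabhakar derivative of Polito--Tomovski and Garra et al., and recombine by the Ringleb decomposition theorem. Your explicit componentwise reading of $\frac{d^k}{dt^k}$ and of $W^{k,1}_{\mathbb{T}}$ for $k=k_1e_1+k_2e_2\in\mathbb{D}$ is also what the paper does implicitly, when it takes $k=\lceil\operatorname{Re}(n_1)\rceil$ in the $e_1$-component and $k=\lceil\operatorname{Re}(n_2)\rceil$ in the $e_2$-component.

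The one step I would not keep is your treatment of the borderline case $\operatorname{Re}(n_i)=k_i$, which the paper's proof does not address.

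\textbf{Case $n_i=k_i$.} The convention that the kernel with second parameter $0$ acts as the identity is correct only when $l_i=0$. Because $1/\Gamma$ vanishes at $0$, the function $t^{-1}E^{-l_i}_{m_i,0}(r_it^{m_i})=\sum_{j\ge 1}\frac{(-l_i)_j\,r_i^j\,t^{m_ij-1}}{j!\,\Gamma(m_ij)}$ is locally integrable. Its Laplace transform is $(1-r_i\xi_i^{-m_i})^{l_i}-1$ for $|r_i\xi_i^{-m_i}|<1$. The convention consistent with the paper's transform formula $\xi^{n-ml}(\xi^m-r)^l\tilde f(\xi)$ is therefore ``identity plus convolution with this function''. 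The identity alone would give $f_i^{(k_i)}$ in that component and lose the Prabhakar correction.

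\textbf{Case $\operatorname{Re}(n_i)=k_i$ with $\operatorname{Im}(n_i)\neq 0$.} Here $k_i-n_i$ is nonzero and purely imaginary. The $j=0$ term $t^{k_i-n_i-1}/\Gamma(k_i-n_i)$ then has modulus $t^{-1}/|\Gamma(k_i-n_i)|$ and is not integrable at $0$, so no kernel convention rescues the Lebesgue integral. The clean fix is to take $k_i=\lfloor\operatorname{Re}(n_i)\rfloor+1$, as Kilbas--Srivastava--Trujillo do for Riemann--Liouville derivatives of complex order, which guarantees $\operatorname{Re}(k_i-n_i)>0$. Alternatively, exclude integer $\operatorname{Re}(n_i)$.

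Away from that case your argument is complete at the paper's level of rigour.
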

\begin{proof}
    Consider 
	\begin{equation}
	(\mathbb{D}_{m,n,r,a^+}^l f)(t)=\left(\frac{d^m}{dt^m}(\mathbb{E}_{m,k-n,r,a^+}^{-l}f) \right) (t).
	\end{equation}
	By using the idempotent representation
		\begin{equation}
	\begin{split}
		(\mathbb{D}_{m,n,r,a^+}^l f)(t)=&
	\left(\frac{d^k}{dt^k}(\mathbb{E}_{m_1,k-n_1,r_1,a^+}^{-l_1}f_1) \right) (t)~e_1+	\left(\frac{d^m}{dt^m}(\mathbb{E}_{m_2,k-n_2,r_2,a^+}^{-l_2}f_2) \right) (t)~e_2	\\
	=&(\mathbb{D}_{m_1,n_1,r_1,a^+}^{l_1} f_1)(t)~e_1+(\mathbb{D}_{m_2,n_2,r_2,a^+}^{l_2} f_2)(t)~e_2.
	\end{split}
	\end{equation}
	Now, 	\begin{equation}
(\mathbb{D}_{m_1,n_1,r_1,a^+}^{l_1}f_1)(t)=\left(\frac{d^k}{dt^k}(\mathbb{E}_{m_1,k-n_1,r_1,a^+}^{-l_1}f_1) \right) (t),
	\end{equation}
	is the complex Prabhakar derivative  \cite{polito2016some,garra2014hilfer} exists for $\operatorname{Re}(m_1)>0,~\operatorname{Re}(n_1)>0,~k=\lceil \operatorname{Re}(n_1)\rceil  .$ \\
	Similarly, 	
	\begin{equation}	(\mathbb{D}_{m_2,n_2,r_2,a^+}^{l_2} f_2)(t)=\left(\frac{d^k}{dt^k}(\mathbb{E}_{m_2,k-n_2,r_2,a^+}^{-l_2}f_2) \right) (t),\end{equation}	
	is also complex Prabhakar derivative exists for $\operatorname{Re}(m_2)>0,~\operatorname{Re}(n_2)>0,~k=\lceil \operatorname{Re}( n_2)\rceil. $ 
    
	Since $(\mathbb{D}_{m_1,n_1,r_1,a^+}^{l_1} f_1)(t)$ and $(\mathbb{D}_{m_2,n_2,r_2,a^+}^{l_2} f_2)(t)$ are exist in $T_1$ and $ T_2$ respectively, by the Ringleb
	decomposition theorem (see, e.g. \cite{ragarwal2022bicomplex}),  \eqref{eq:bc prabhakar der} is also exists in $T$ where $|\operatorname{Im_j}(m)|<\operatorname{Re}(m),~|\operatorname{Im_j}(n)|<\operatorname{Re}(n).$
     
\end{proof}

Similarly, the bicomplex  regularized Prabhakar derivative can be defined as in the following theorem:
\begin{theorem}
	Let $f \in AC_\mathbb{T}^M(a,b)$ be a bicomplex valued function with $0\le a<t<b\le\infty,$ 
continuous derivatives up to order n-1 on (a,b) Let $m,n,r,l \in \mathbb{T}$, $m= m_1e_1+m_2e_2$, $ n=n_1e_1+n_2e_2$,  $r=r_1e_1+r_2e_2,~l=l_1e_1+l_2e_2,$ the bicomplex  regularized Prabhakar derivative  is defined as
	\begin{equation}\label{eq: bc reg der}
	\begin{split}
	(^C\mathbb{D}_{m,n,r,a^+}^l f)(t)&=\left(\mathbb{E}_{m,k-n,r,a^+}^{-l}\frac{d^k}{dt^k}f \right) (t)\\
	&=(\mathbb{D}_{m,n,r,a^+}^l f)(t)-\sum_{p=0}^{k-1}t^{p-n}\mathbb{E}_{m,p-n+1}^{-l}(r t^m)f^{(p)}(a^+).
	\end{split}
	\end{equation}
\end{theorem}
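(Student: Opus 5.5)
Following the pattern of the previous results, the approach is to split the operator into idempotent components, use the known complex-valued identity on each, and recombine. We write $f=f_1e_1+f_2e_2$ with $f_1,f_2\in AC^M(a,b)$, and decompose $m,n,r,l$ into idempotent components. Since differentiation in the real variable $t$ acts componentwise, $\frac{d^k}{dt^k}f=f_1^{(k_1)}e_1+f_2^{(k_2)}e_2$, where $k=k_1e_1+k_2e_2$ with $k_i=\lceil\operatorname{Re}(n_i)\rceil$. By the idempotent splitting \eqref{eq:bc p int idem} of the bicomplex Prabhakar integral, the first line of \eqref{eq: bc reg der} becomes
\[
\big(\mathbb{E}_{m_1,k_1-n_1,r_1,a^+}^{-l_1}f_1^{(k_1)}\big)(t)\,e_1+\big(\mathbb{E}_{m_2,k_2-n_2,r_2,a^+}^{-l_2}f_2^{(k_2)}\big)(t)\,e_2 .
\]
Each summand is the classical complex regularized Prabhakar derivative ${}^C\mathbb{D}^{l_i}_{m_i,n_i,r_i,a^+}f_i$ of \cite{polito2016some,garra2014hilfer}, and each is well defined for $\operatorname{Re}(m_i)>0$, $\operatorname{Re}(n_i)>0$. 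By the Lemma, these conditions are equivalent to the bicomplex ones, so the Ringleb decomposition gives existence of the left side in $\mathbb{T}$.

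For the second equality, the main step is the complex identity in each component:
\[
({}^C\mathbb{D}^{l_i}_{m_i,n_i,r_i,a^+}f_i)(t)=(\mathbb{D}^{l_i}_{m_i,n_i,r_i,a^+}f_i)(t)-\sum_{p=0}^{k_i-1}t^{p-n_i}\mathbb{E}^{-l_i}_{m_i,p-n_i+1}(r_it^{m_i})f_i^{(p)}(a^+).
\]
One could simply cite it from \cite{polito2016some}. To make the argument self-contained, I would instead Taylor-expand $f_i$ about $a$ with integral remainder,
\[
f_i(y)=\sum_{p=0}^{k_i-1}\frac{f_i^{(p)}(a^+)}{p!}(y-a)^p+\mathbb{I}^{k_i}_{a+}f_i^{(k_i)}(y),
\]
and then apply $\mathbb{E}^{-l_i}_{m_i,k_i-n_i,r_i,a^+}$ followed by $d^{k_i}/dt^{k_i}$. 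The remainder term is handled by the Prabhakar-kernel analogue of Theorem~\ref{th:frac pr}, i.e.\ the semigroup relation $\mathbb{E}^{l}_{m,n,r,a^+}\mathbb{I}^{\alpha}_{a+}=\mathbb{E}^{l}_{m,n+\alpha,r,a^+}$. This relation comes from the Convolution Identity with $\sigma=0$, since $t^{\alpha-1}/\Gamma(\alpha)=t^{\alpha-1}\mathbb{E}^0_{m,\alpha}(rt^m)$. Consequently the $k_i$ derivatives return exactly $\mathbb{E}^{-l_i}_{m_i,k_i-n_i,r_i,a^+}f_i^{(k_i)}$.

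The polynomial terms are computed with the same Convolution Identity (again with $\sigma=0$ and $\nu=p+1$), followed by Theorem~\ref{th:frac pr}(2) for the $k_i$-fold derivative. This yields $(t-a)^{p-n_i}\mathbb{E}^{-l_i}_{m_i,p-n_i+1}(r_i(t-a)^{m_i})f_i^{(p)}(a^+)$, which matches the statement with $a=0$, or read with $t-a$.

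Finally, multiply the component identities by $e_1,e_2$ and add, using $\xi=\xi_1e_1+\xi_2e_2$ for every quantity. This recombines them into the bicomplex formula \eqref{eq: bc reg der}, with the sum interpreted componentwise up to $k-1$ for $k\in\mathbb{D}$. I expect the main obstacle to be bookkeeping rather than analysis. Because $k=k_1e_1+k_2e_2$ is hyperbolic, the sum $\sum_{p=0}^{k-1}$ has different upper limits in the two components, and its meaning must be fixed as the idempotent combination of two complex sums of lengths $k_1$ and $k_2$. A secondary concern is justifying the interchange of $d^{k_i}/dt^{k_i}$ with the convolution for the remainder; for this I would rely on the $AC^{M}$ hypothesis and the boundedness in Theorem~\ref{th:bounded}.
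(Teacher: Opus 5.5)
Your proposal is correct and follows the paper's proof. Both split $f$ and the parameters into idempotent components, identify each component with the complex regularized Prabhakar derivative, and recombine via the Ringleb decomposition. The paper takes the componentwise identity ${}^C\mathbb{D}^{l_i}_{m_i,n_i,r_i,a^+}f_i=\mathbb{D}^{l_i}_{m_i,n_i,r_i,a^+}f_i-\sum_{p}(\cdots)$ directly from the complex literature. You also derive it, via Taylor expansion with integral remainder and the Convolution Identity with $\sigma=0$. That derivation correctly shows two points the paper's statement glosses over: for $a\neq 0$ the terms should read $(t-a)^{p-n}\mathbb{E}^{-l}_{m,p-n+1}(r(t-a)^m)$, and $\sum_{p=0}^{k-1}$ must be read componentwise when $k_1\neq k_2$.
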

\begin{proof}
	Consider 
	\begin{equation}
	\begin{split}
	(^C\mathbb{D}_{m,n,r,a^+}^l f)(t)&=\left(\mathbb{E}_{m,k-n,r,a^+}^{-l}\frac{d^k}{dt^k}f \right) (t)\\
	&=(\mathbb{D}_{m,n,r,a^+}^l f)(t)-\sum_{p=0}^{k-1}t^{p-n}\mathbb{E}_{m,p-n+1}^{-l}(r t^m)f^{(p)}(a^+).
	\end{split}
	\end{equation}
	By using the idempotent representation
\begin{equation}
\begin{split}
\left( ^C\mathbb{D}_{m,n,r,a^+}^l f\right) (t)	=&\left(\mathbb{E}_{m_1,k-n_1,r_1,a^+}^{-l_1}\frac{d^k}{dt^k}f_1 \right) (t)e_1+\left(\mathbb{E}_{m_2,k-n_2,r_2,a^+}^{-l_2}\frac{d^k}{dt^k}f_2 \right) (t)e_2\\
=&\left( (\mathbb{D}_{m_1,n_1,r_1,a^+}^{l_1} f_1)(t)-\sum_{p=0}^{k-1}t^{p-n_1}\mathbb{E}_{m_1,p-n_1+1}^{-l_1}(r_1 t^{m_1})f_1^{(p)}(a^+)\right) e_1\\
&+\left( (\mathbb{D}_{m_2,n_2,r_2,a^+}^{l_2} f_2)(t)-\sum_{p=0}^{k-1}t^{p-n_2}\mathbb{E}_{m_2,p-n_2+1}^{-l_2}(r_2 t^{m_2})f_2^{(p)}(a^+)\right) e_2\\
=&\left( ^C\mathbb{D}_{m_1,n_1,r_1,a^+}^{l_1} f_1\right) (t) e_1+\left( ^C\mathbb{D}_{m_2,n_2,r_2,a^+}^{l_2} f_2\right) (t)e_2.\\
\end{split}
\end{equation}
Now, 	\begin{equation}
	\begin{split}
	\left( ^C\mathbb{D}_{m_1,n_1,r_1,a^+}^{l_1} f_1\right) (t)&=\left(\mathbb{E}_{m_1,k-n_1,r_1,a^+}^{-l_1}\frac{d^k}{dt^k}f_1 \right) (t)\\
	&=(\mathbb{D}_{m_1,n_1,r_1,a^+}^{l_1} f_1)(t)-\sum_{p=0}^{k-1}t^{p-n_1}\mathbb{E}_{m_1,p-n_1+1}^{-l_1}(r_1 t^{m_1})f_1^{(p)}(a^+),
	\end{split}
	\end{equation}
is the complex regularized  Prabhakar  derivative exists for $\operatorname{Re}(m_1)>0,~\operatorname{Re}(n_1)>0.$ \\
	Similarly, 	
	 	\begin{equation}
	\begin{split}
	\left( ^C\mathbb{D}_{m_2,n_2,r_2,a^+}^{l_2} f_2\right) (t)&=\left(\mathbb{E}_{m_2,k-n_2,r_2,a^+}^{-l_2}\frac{d^k}{dt^k}f_2 \right) (t)\\
	&=(\mathbb{D}_{m_2,n_2,r_2,a^+}^{l_2} f_2)(t)-\sum_{p=0}^{k-1}t^{p-n_2}\mathbb{E}_{m_2,p-n_2+1}^{-l_2}(r_2 t^{m_2})f_2^{(p)}(a^+),
	\end{split}
	\end{equation}
	is also complex regularized  Prabhakar derivative exists for $\operatorname{Re}(m_2)>0,~\operatorname{Re}(n_2)>0.$ 
    
	Since $(^C\mathbb{D}_{m_1,n_1,r_1,a^+}^{l_1} f_1)(t)$ and $(^C\mathbb{D}_{m_2,n_2,r_2,a^+}^{l_2} f_2)(t)$ are exists in $T_1$ and $ T_2$ respectively, by the Ringleb
	decomposition theorem, equation \eqref{eq: bc reg der} is also exists in $T$ where for 
	$|\operatorname{Im_j}(m)|<\operatorname{Re}(m)$ and 
	$\operatorname{Im_j}(n)|<\operatorname{Re}(n).$
	\end{proof}
\begin{remark}
\noindent
for $l=0$ the operator \eqref{eq:bc p int1} coincides with the Riemann-Liouville fractional integral  of order $n$ in the bicomplex space
\begin{equation}\label{eq:bc relation}
(\mathbb{E}_{m,n,r,a^+}^0 f)\equiv \mathbb{I}_{a+}^n f.
\end{equation}
For $l=0$ the operator \eqref{eq: bc reg der} coincides with the Caputo derivative.
\end{remark}

\section{Some properties of bicomplex  Prabhakar fractional operators}
Theorem \ref{th:learity1}, prove the linearity property of the bicomplex Prabhakar integral operator, bicomplex Prabhakar derivative and bicomplex regularized Prabhakar derivative. 
\begin{theorem}\label{th:learity1}
Let $f,g$ 
are  bicomplex valued functions and $c,d$ be  constants. Let $m,n,r,l \in \mathbb{T}$ 
and 
$|\operatorname{Im_j}(m)|<\operatorname{Re}(m),~|\operatorname{Im_j}(n)|<\operatorname{Re}(n).$ For (i) and (ii)  $f,g\in L_{\mathbb{T}}(a,b)$, and for (iii)  $f,g\in AC_\mathbb{T}^M(a,b).$
  Then  
\begin{eqnarray}
&	(i)~ &	(\mathbb{E}_{m,n,r,a^+}^l)(c f(t)+dg(t)) =	c(\mathbb{E}_{m,n,r,a^+}^l) f(t)+	d(\mathbb{E}_{m,n,r,a^+}^l)g(t).\\
&  (ii)~	&(\mathbb{D}_{m,n,r,a^+}^l )(cf(t)+dg(t))=c(\mathbb{D}_{m,n,r,a^+}^l )f(t)+d(\mathbb{D}_{m,n,r,a^+}^l )g(t).  \\
& (iii)~	&(^C\mathbb{D}_{m,n,r,a^+}^l )(cf(t)+dg(t))=c(^C\mathbb{D}_{m,n,r,a^+}^l )f(t)+d(^C\mathbb{D}_{m,n,r,a^+}^l )g(t). 
\end{eqnarray}

\end{theorem}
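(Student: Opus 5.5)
We prove the three linearity statements in order, each reducing to linearity of integration or differentiation, with the idempotent decomposition as an optional cross-check. Throughout, the constants $c,d$ may be bicomplex. Bicomplex multiplication is commutative and associative, so $c$ and $d$ can be pulled through the kernel and through integrals and derivatives with respect to the real variable.

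For (i), we start from the definition \eqref{eq:bc p int1} and write
\begin{equation*}
(\mathbb{E}_{m,n,r,a^+}^l)(cf+dg)(t)=\int_{\varUpsilon}(t-y)^{n-1}\mathbb{E}_{m,n}^l[r(t-y)^m]\,\bigl(cf(y)+dg(y)\bigr)\,dy .
\end{equation*}
We pass to the idempotent form \eqref{eq:bc p int idem}. Writing $c=c_1e_1+c_2e_2$ and $d=d_1e_1+d_2e_2$, the $e_i$-component of $cf+dg$ is $c_if_i+d_ig_i$, because $e_1e_2=0$ and $e_i^2=e_i$. Each component is then a complex Prabhakar integral of $c_if_i+d_ig_i$. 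Both $f_i$ and $g_i$ lie in $L(a,b)$, and the complex Prabhakar integral is bounded on $L(a,b)$ by Theorem \ref{th:bounded}, so the integrals of $f_i$ and $g_i$ are each finite. Ordinary linearity of the Lebesgue integral therefore splits the $e_i$-component into $c_i(\mathbb{E}_{m_i,n_i,r_i,a^+}^{l_i}f_i)+d_i(\mathbb{E}_{m_i,n_i,r_i,a^+}^{l_i}g_i)$. Recombining the two components with $e_1,e_2$ gives the right-hand side of (i).

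For (ii), we apply (i) with parameters $(m,k-n,r,-l)$ in place of $(m,n,r,l)$; it yields
\begin{equation*}
\mathbb{E}^{-l}_{m,k-n,r,a^+}(cf+dg)=c\,\mathbb{E}^{-l}_{m,k-n,r,a^+}f+d\,\mathbb{E}^{-l}_{m,k-n,r,a^+}g .
\end{equation*}
We then apply $d^k/dt^k$ componentwise. Here $k=\lceil n\rceil$ acts as the integer $\lceil\operatorname{Re}(n_i)\rceil$ on the $e_i$-component. The weak derivatives of order $k$ exist because $f*e^{-l}_{m,k-n,r}$ and $g*e^{-l}_{m,k-n,r}$ belong to $W^{k,1}_{\mathbb{T}}(a,b)$, and the derivative is linear, so the identity follows from the definition \eqref{eq:bc prabhakar der}.

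For (iii), we use the first line of \eqref{eq: bc reg der}. Since $f,g\in AC^M_{\mathbb{T}}(a,b)$, the derivatives satisfy $(cf+dg)^{(k)}=cf^{(k)}+dg^{(k)}$ almost everywhere, with these derivatives integrable. Applying (i) with parameters $(m,k-n,r,-l)$ then gives the result. Alternatively, one can use the second line of \eqref{eq: bc reg der}: combine (ii) with the linearity of the initial values, $(cf+dg)^{(p)}(a^+)=cf^{(p)}(a^+)+dg^{(p)}(a^+)$.

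The only real obstacle is bookkeeping rather than analysis. The main point is that $k$ is an element of $\mathbb{D}$, so the derivative order may differ between the two idempotent components. This is handled by working in the idempotent representation from the start and arguing separately in each component, exactly as in the preceding existence theorems.
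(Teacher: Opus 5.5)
Your proposal is correct and follows essentially the paper's route: split into idempotent components via \eqref{eq:bc p int idem}, apply linearity of the complex operators in each component, and recombine. For (ii) and (iii) the paper only says ``in a similar way''; your reduction to (i) with parameters $(m,k-n,r,-l)$ plus linearity of $d^k/dt^k$ makes that explicit. Your treatment of genuinely bicomplex constants via $c=c_1e_1+c_2e_2$ is also a bit more careful than the paper, which writes $cf_1+dg_1$ in the $e_1$-slot and so tacitly treats $c,d$ as scalars acting identically on both components.
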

\begin{proof}(i)
By using \eqref{eq:bc p int idem}, we have
\begin{equation}
\begin{split}
&(\mathbb{E}_{m,n,r,a^+}^l)( cf(t)+dg(t)) \\
&=\left[  (\mathbb{E}_{m_1,n_1,r_1,a^+}^{l_1})e_1+	(\mathbb{E}_{m_2,n_2,r_2,a^+}^{l_2})e_2\right] \left[\{cf_1(t)+dg_1(t)\} e_1+\{cf_2(t)+dg_2(t)e_2\} \right] \\
&=\left[ (\mathbb{E}_{m_1,n_1,r_1,a^+}^{l_1})\{cf_1(t)+dg_1(t)\}\right] e_1+\left[ (\mathbb{E}_{m_2,n_2,r_2,a^+}^{l_2})\{cf_2(t)+dg_2(t)\}\right] e_2\\
&=\left[c\{ (\mathbb{E}_{m_1,n_1,r_1,a^+}^{l_1})e_1+(\mathbb{E}_{m_2,n_2,r_2,a^+}^{l_2})e_2\}\{f_1(t)e_1+f_2(t)e_2\}\right] \\
&~~+\left[d\{ (\mathbb{E}_{m_1,n_1,r_1,a^+}^{l_1})e_1+(\mathbb{E}_{m_2,n_2,r_2,a^+}^{l_2})e_2\}\{g_1(t)e_1+g_2(t)e_2\}\right] \\
&=c(\mathbb{E}_{m,n,r,a^+}^l) f(t)+	d(\mathbb{E}_{m,n,r,a^+}^l)g(t).
\end{split}
\end{equation}

\noindent In a similar way, we prove the linearity property for bicomplex Prabhakar derivative and bicomplex  regularized Prabhakar derivative.
\end{proof}

\noindent The semigroup property of the bicomplex Prabhakar integral operator is important because it guarantees  computational simplification, while also establishing the operator as a natural extension of classical fractional integrals. Here we obtain the semi group property for  bicomplex integral operator as follows: 
\begin{theorem}[Semi group property ]
	let $\sigma ,\nu ,m,n,l,r\in \mathbb{T},$  then for $|\operatorname{Im_j}(m )|<\operatorname{Re}(m),~|\operatorname{Im_j}(n )|<\operatorname{Re}(n),~|\operatorname{Im_j}(\nu)|<\operatorname{Re}(\nu)$   the  relation  
	\begin{equation}\label{eq: comp1}
	\mathbb{E}^{l}_{m,n,r,a^+}\mathbb{E}^{\sigma}_{m,\nu ,r,a^+}	f\equiv\mathbb{E}^{l+\sigma}_{m,n+\nu  ,r,a^+}f,
	\end{equation}
	is valid for any bicomplex valued summable function $f \in  L_{\mathbb{T}}(a, b).$ In particular,
	\begin{equation}\label{eq: comp2}
	\mathbb{E}^{l}_{m,n,r,a^+}\mathbb{E}^{-l}_{m,\nu ,r,a^+}	f\equiv\mathbb{I}^{n+\nu }_{a^+}f.
	\end{equation}
\end{theorem}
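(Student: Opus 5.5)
The semigroup relation \eqref{eq: comp1} reduces to the Convolution Identity proved in Section 2, combined with Fubini's theorem. The reduction can be done directly in the bicomplex setting, or componentwise through the idempotent representation \eqref{eq:bc p int idem}. I would take the idempotent route, since it lets every analytic step (interchange of integrals, convergence) rest on the complex theory.

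First, write $f=f_1e_1+f_2e_2$ with $f_1,f_2\in L(a,b)$. By \eqref{eq:bc p int idem}, applied twice and using $e_1e_2=0$ and $e_i^2=e_i$, the left side of \eqref{eq: comp1} becomes
\[
\mathbb{E}^{l_1}_{m_1,n_1,r_1,a^+}\mathbb{E}^{\sigma_1}_{m_1,\nu_1,r_1,a^+}f_1\,e_1+\mathbb{E}^{l_2}_{m_2,n_2,r_2,a^+}\mathbb{E}^{\sigma_2}_{m_2,\nu_2,r_2,a^+}f_2\,e_2 .
\]
The Lemma shows that the conditions $|\operatorname{Im_j}(\cdot)|<\operatorname{Re}(\cdot)$ on $m,n,\nu$ are equivalent to positivity of the real parts of the idempotent components. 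So each component involves genuine complex Prabhakar integrals with $\operatorname{Re}(m_i),\operatorname{Re}(n_i),\operatorname{Re}(\nu_i)>0$.

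For a fixed component $i$, expand the composition as a double integral:
\[
\int_a^t (t-y)^{n_i-1}\mathbb{E}^{l_i}_{m_i,n_i}\big(r_i(t-y)^{m_i}\big)\int_a^y (y-s)^{\nu_i-1}\mathbb{E}^{\sigma_i}_{m_i,\nu_i}\big(r_i(y-s)^{m_i}\big)f_i(s)\,ds\,dy .
\]
Then interchange the order of integration. This is the main obstacle: Fubini must be justified. I would bound the kernels by $|t-y|^{\operatorname{Re}(n_i)-1}\sum_k \frac{|(l_i)_k|}{k!\,|\Gamma(m_ik+n_i)|}|r_i|^k(b-a)^{k\operatorname{Re}(m_i)}$, and similarly for the $\nu_i$ kernel. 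Integrability of the resulting product then follows as in the proof of Theorem \ref{th:bounded}, i.e.\ the estimate of \cite[p.40]{kilbas2004generalized}, since $f_i\in L(a,b)$ and $\operatorname{Re}(n_i),\operatorname{Re}(\nu_i)>0$.

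After swapping, the inner integral over $y\in(s,t)$ becomes, under the substitution $y-s=u$, $x=t-s$,
\[
\int_0^{x}(x-u)^{n_i-1}\mathbb{E}^{l_i}_{m_i,n_i}\big(r_i(x-u)^{m_i}\big)u^{\nu_i-1}\mathbb{E}^{\sigma_i}_{m_i,\nu_i}\big(r_iu^{m_i}\big)\,du .
\]
This is exactly the component form of the Convolution Identity \eqref{eq: result} (equations \eqref{eq: 1}--\eqref{eq: 2}), so it equals $x^{n_i+\nu_i-1}\mathbb{E}^{l_i+\sigma_i}_{m_i,n_i+\nu_i}(r_ix^{m_i})$. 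Hence each component is $\mathbb{E}^{l_i+\sigma_i}_{m_i,n_i+\nu_i,r_i,a^+}f_i$. Recombining with $e_1,e_2$ via \eqref{eq:bc p int idem} gives \eqref{eq: comp1}; the hypothesis on $n+\nu$ holds because its components have positive real parts.

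For \eqref{eq: comp2}, take $\sigma=-l$ in \eqref{eq: comp1}. The right side becomes $\mathbb{E}^{0}_{m,n+\nu,r,a^+}f$. The kernel now reduces to the power function, since $(0)_u=0$ for $u\ge1$ gives $\mathbb{E}^0_{m,n+\nu}=1/\Gamma_2(n+\nu)$. By \eqref{eq:bc relation} this is $\mathbb{I}^{n+\nu}_{a^+}f$.
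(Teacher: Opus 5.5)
Your argument is correct and has the same outer structure as the paper's proof: decompose via \eqref{eq:bc p int idem}, prove the semigroup law in each idempotent component, recombine, then set $\sigma=-l$ and use \eqref{eq:bc relation}.

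The difference is in the componentwise step. The paper simply quotes it from \cite[Theorem 8, eq. (6.1)]{kilbas2004generalized}. You instead derive it from Tonelli--Fubini together with the component form \eqref{eq: 1}--\eqref{eq: 2} of the paper's own Convolution Identity, which is essentially the argument behind the cited complex theorem.

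The paper's route is shorter and leaves all the analysis to the literature. Yours buys three things:
\begin{itemize}
\item it is self-contained;
\item it actually uses the Section 2 identity, which the paper proves but never invokes at this point;
\item it shows exactly where $\operatorname{Re}(n_i),\operatorname{Re}(\nu_i)>0$ are needed, namely for the Beta integral $B(\operatorname{Re}(n_i),\operatorname{Re}(\nu_i))$ in the dominating bound to be finite.
\end{itemize}

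One point is worth stating explicitly. After Tonelli, the dominating integral is a constant multiple of $\int_a^t|f_i(s)|(t-s)^{\operatorname{Re}(n_i+\nu_i)-1}\,ds$. When $\operatorname{Re}(n_i+\nu_i)<1$, this is in general finite only for almost every $t$. So Fubini, and hence \eqref{eq: comp1}, holds for a.e.\ $t$, which is exactly what $\equiv$ means in $L_{\mathbb{T}}(a,b)$.
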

\begin{proof}	
By using the equation  \eqref{eq:bc p int idem} and   \cite[Theorem 8, equation (6.1)]{kilbas2004generalized},
we obtain
\begin{equation}
\begin{split}
\left( \mathbb{E}^{l}_{m,n,r,a^+} \mathbb{E}^{\sigma}_{m,\nu,r,a^+}f \right)(x)=&\left( \mathbb{E}^{l_1}_{m_1,n_1,r_1,a^+} \mathbb{E}^{\sigma_1}_{m_1,\nu_1,r_1,a^+} f_1\right)(x)e_1+\left( \mathbb{E}^{l_2}_{m_2,n_2,r_2,a^+} \mathbb{E}^{\sigma_2}_{m_2,\nu_2,r_2,a^+} f_2 \right)(x)e_2\\
=& \left( \mathbb{E}^{l_1 + \sigma_1}_{m_1,n_1 + \nu_1,r_1,a^+}f_1 \right)(x)e_1+\left( \mathbb{E}^{l_2 + \sigma_2}_{m_2,n_2 + \nu_2,r_2,a^+} f_2 \right)(x)e_2\\
&=\mathbb{E}^{l+\sigma}_{m,n+\nu  ,r,a^+}f.
\end{split}
\end{equation}
\noindent This proves the relation
\begin{equation}\label{eq:semigroup11}
    \mathbb{E}^{l}_{m,n,r,a^+} \, \mathbb{E}^{\sigma}_{m,\nu,r,a^+} f \equiv\ \mathbb{E}^{l + \sigma}_{m,n + \nu,r,a^+} f.
\end{equation}

By substituting $\sigma=-l$  in the above equation \eqref{eq:semigroup11}, we obtain
\begin{equation}\label{eq:semigroup12}
    \mathbb{E}^{l}_{m,n,r,a^+} \, \mathbb{E}^{-l}_{m,\nu,r,a^+} f \equiv\ \mathbb{E}^{0}_{m,n + \nu,r,a^+} f.
\end{equation}
By using equation (\ref{eq:bc relation}), the operator at right side in the equation \eqref{eq:semigroup12}, coincides with the Riemann-Liouville fractional integral  of order $n+\nu$ in the bicomplex space. Hence, we get
\begin{equation}	\mathbb{E}^{l}_{m,n,r,a^+}\mathbb{E}^{-l}_{m,\nu ,r,a^+}	f\equiv\mathbb{I}^{n+\nu }_{a^+}f.
	\end{equation}
\end{proof}

\noindent The study of compositions of fractional calculus operators with the bicomplex Prabhakar
integral operator is very important. They
provide analytical simplification by replacing successive applications of
fractional operators with a single Prabhakar operator having modified
parameters.
Here, we obtain  the compositions of fractional calculus operators with 
integral operator with bicomplex  Prabhakar function in the kernel  as follows:

\begin{theorem}[Compositions of Riemann–Liouville fractional integral operator with Prabhakar integral operator]
	Let $f\in  L_{\mathbb{T}}(a, b)$ be a bicomplex valued function. Let $\alpha,m,n,l,r\in \mathbb{T},~ ~\alpha= \alpha_1e_1+\alpha_2e_2,  ~ \upbeta=m_1e_1+m_2e_2,~ n=n_1e_1+n_2e_2,l=l_1e_1+l_2e_2,$ then for $|\operatorname{Im_j}(\upalpha)|<\operatorname{Re}(\upalpha),~|\operatorname{Im_j}(m) |<\operatorname{Re}(m),~|\operatorname{Im_j}(n) |<\operatorname{Re}(n),$
	\begin{equation}
\mathbb{I}^{\alpha}_{a^+}\mathbb{E}^{l}_{m,n,r,a^+}f\equiv\mathbb{E}^{l}_{m,n+\alpha ,r,a^+}f\equiv\mathbb{E}^{l}_{m,n,r,a^+}	\mathbb{I}^{\alpha}_{a^+}f.
	\end{equation}
\end{theorem}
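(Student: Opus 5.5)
The plan is to follow the same pattern as the semigroup property: split into idempotent components, apply the classical complex result in each component, and recombine. Write $f=f_1e_1+f_2e_2$, $\alpha=\alpha_1e_1+\alpha_2e_2$, and similarly for $m,n,l,r$. By the preceding Lemma, the hypotheses $|\operatorname{Im_j}(\alpha)|<\operatorname{Re}(\alpha)$, $|\operatorname{Im_j}(m)|<\operatorname{Re}(m)$ and $|\operatorname{Im_j}(n)|<\operatorname{Re}(n)$ correspond to $\operatorname{Re}(\alpha_i)>0$, $\operatorname{Re}(m_i)>0$ and $\operatorname{Re}(n_i)>0$ for $i=1,2$. (Strictly, the Lemma only states the converse implication; the forward direction follows from $\operatorname{Re}(m_i)=p_0\pm p_3$.) These are exactly the conditions under which the complex operators are defined.

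First, I will decompose the bicomplex Riemann--Liouville integral in the same way as in the proof of Theorem \ref{th:frac pr}:
\[
\mathbb{I}^{\alpha}_{a^+}g=\mathbb{I}^{\alpha_1}_{a^+}g_1\,e_1+\mathbb{I}^{\alpha_2}_{a^+}g_2\,e_2 .
\]
I will take $g=\mathbb{E}^{l}_{m,n,r,a^+}f$, whose components are given by \eqref{eq:bc p int idem}. Since $e_1e_2=0$ and $e_i^2=e_i$, this gives
\[
\mathbb{I}^{\alpha}_{a^+}\mathbb{E}^{l}_{m,n,r,a^+}f=\bigl(\mathbb{I}^{\alpha_1}_{a^+}\mathbb{E}^{l_1}_{m_1,n_1,r_1,a^+}f_1\bigr)e_1+\bigl(\mathbb{I}^{\alpha_2}_{a^+}\mathbb{E}^{l_2}_{m_2,n_2,r_2,a^+}f_2\bigr)e_2 .
\]
The same computation applies to $\mathbb{E}^{l}_{m,n,r,a^+}\mathbb{I}^{\alpha}_{a^+}f$, with the order of the operators reversed in each component.

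Next, in each component I will invoke the complex composition result of Kilbas et al. \cite[Theorem 7]{kilbas2004generalized}:
\[
\mathbb{I}^{\alpha_i}_{a^+}\mathbb{E}^{l_i}_{m_i,n_i,r_i,a^+}f_i=\mathbb{E}^{l_i}_{m_i,n_i+\alpha_i,r_i,a^+}f_i=\mathbb{E}^{l_i}_{m_i,n_i,r_i,a^+}\mathbb{I}^{\alpha_i}_{a^+}f_i,
\]
valid for $f_i\in L(a,b)$. If I prefer a self-contained argument, this identity can be derived from the results already in the paper. The left-hand side is $\mathbb{I}^{\alpha_i}_{a^+}$ applied to a convolution with the kernel $e^{l_i}_{m_i,n_i,r_i}$. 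By Fubini, this equals the convolution of $f_i$ with $\mathbb{I}^{\alpha_i}_{0^+}e^{l_i}_{m_i,n_i,r_i}$, which by Theorem \ref{th:frac pr}(1) (with $a=0$) is $e^{l_i}_{m_i,n_i+\alpha_i,r_i}$. The right-hand equality follows in the same way, because convolution is commutative and associative.

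Finally, I will recombine the two components. Since $n+\alpha=(n_1+\alpha_1)e_1+(n_2+\alpha_2)e_2$, the idempotent representation \eqref{eq:bc p int idem} gives $\mathbb{E}^{l}_{m,n+\alpha,r,a^+}f$, and the same recombination gives $\mathbb{E}^{l}_{m,n,r,a^+}\mathbb{I}^{\alpha}_{a^+}f$. The existence of both sides in $\mathbb{T}$ follows from the Ringleb decomposition argument, as before.

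The only step needing real care is justifying the Fubini interchange in the self-contained version. This requires $|e^{l_i}_{m_i,n_i,r_i}|$ to be integrable on $(0,b-a)$, which follows from $\operatorname{Re}(n_i)>0$ and the bound used in Theorem \ref{th:bounded}, together with $f_i\in L(a,b)$. With those in hand, Tonelli's theorem applies and the interchange is valid; the rest is bookkeeping.
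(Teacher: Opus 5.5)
Your proposal is correct and is essentially the paper's proof: decompose into idempotent components via \eqref{eq:bc p int idem}, apply the classical complex identity $\mathbb{I}^{\alpha_i}_{a^+}\mathbb{E}^{l_i}_{m_i,n_i,r_i,a^+}f_i=\mathbb{E}^{l_i}_{m_i,n_i+\alpha_i,r_i,a^+}f_i=\mathbb{E}^{l_i}_{m_i,n_i,r_i,a^+}\mathbb{I}^{\alpha_i}_{a^+}f_i$ in each component (the paper quotes it as Theorem~6, eq.~(5.1) of Kilbas--Saigo), and recombine using $n+\alpha=(n_1+\alpha_1)e_1+(n_2+\alpha_2)e_2$. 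Your optional Fubini derivation of the componentwise identity from Theorem~\ref{th:frac pr}(1) with $a=0$ is also sound, since $|e^{l_i}_{m_i,n_i,r_i}(s)|\le C\,s^{\operatorname{Re}(n_i)-1}$ on $(0,b-a)$ lets Tonelli apply; it makes the argument self-contained, whereas the paper simply cites the complex result.
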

\begin{proof}
	From equation  \eqref{eq:bc p int idem} and   equation  \cite[Theorem 6,(5.1)]{kilbas1995fractional} we obtain
    for $x > a$ 
    \begin{equation}
    \begin{split}
    (	\mathbb{I}^{\alpha}_{a+}\mathbb{E}^{l}_{m,n,r,a^+}f)(x)&=(	\mathbb{I}^{\alpha_1}_{a+}\mathbb{E}^{l_1}_{m_1,n_1,r_1,a^+}~f_1)(x)~e_1+(	\mathbb{I}^{\alpha_2}_{a^+}\mathbb{E}^{l_2}_{m_2,n_2,r_2,a^+}~f_2)(x)~e_2\\
    &=\mathbb{E}^{l_1}_{m_1,n_1+\alpha_1 ,r_1,a^+}~f_1(x)~e_1+\mathbb{E}^{l_2}_{m_2,n_2+\alpha_2 ,r_2,a^+}~f_2(x)~e_2\\
    &=\mathbb{E}^{l}_{m,n+\alpha ,r,a^+}~f(x).\\
    \end{split}
    \end{equation}
    Also,
	\begin{equation}
    \begin{split}
    (	\mathbb{E}^{l}_{m,n,r,a^+}\mathbb{I}^{\alpha}_{a^+}f)(x)&=(	\mathbb{E}^{l_1}_{m_1,n_1,r_1,a^+}\mathbb{I}^{\alpha_1}_{a^+}~f_1)(x)~e_1+(	\mathbb{E}^{l_2}_{m_2,n_2,r_2,a^+}\mathbb{I}^{\alpha_2}_{a^+}~f_2)(x)~e_2\\
    &=\mathbb{E}^{l_1}_{m_1,n_1+\alpha_1 ,r_1,a^+}~f_1(x)~e_1+\mathbb{E}^{l_2}_{m_2,n_2+\alpha_2 ,r_2,a^+}~f_2(x)~e_2\\
    &=\mathbb{E}^{l}_{m,n+\alpha ,r,a^+}~f(x).\\
    \end{split}
    \end{equation}
    \end{proof}
\noindent This is the commutative property of  the Riemann–Liouville fractional integral operator and  bicomplex Prabhakar integral operator.
Similarly  the result for the Riemann–Liouville fractional derivative operator $\mathbb{D}^{\alpha}_{a+}$ as  \cite{kilbas2004generalized} follows:
\begin{theorem}[Compositions of Riemann–Liouville fractional derivative operator with Prabhakar
integral operator]
	Let $f\in  L_{\mathbb{T}}(a, b)$ be a bicomplex valued function. Let $\alpha,m,n,l,r\in \mathbb{T},~ ~\alpha= \alpha_1e_1+\alpha_2e_2,  ~ \upbeta=m_1e_1+m_2e_2,~ n=n_1e_1+n_2e_2,l=l_1e_1+l_2e_2,$ then for $|\operatorname{Im_j}(\upalpha)|<\operatorname{Re}(\upalpha),~|\operatorname{Im_j}(m )|<\operatorname{Re}(m),~|\operatorname{Im_j}(n) |<\operatorname{Re}(n),$
	\begin{equation}
\mathbb{D}^{\alpha}_{a^+}\mathbb{E}^{l}_{m,n,r,a^+}f\equiv\mathbb{E}^{l}_{m,n-\alpha ,r,a^+}f,
\end{equation}
holds for any continuous function $f\in  C(a, b).$ In particular, for $k\in \mathbb{N}$ and  $|\operatorname{Im_j}(n)|<\operatorname{Re}(n)-k$
\begin{equation}
\left( \frac{d}{dx}\right)^k \mathbb{E}^{l}_{m,n,r,a^+}f\equiv\mathbb{E}^{l}_{m,n-k,r,a^+}f.
\end{equation}\end{theorem}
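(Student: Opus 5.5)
The proof will follow the same idempotent strategy used for the preceding composition theorem: split everything into its $e_1$ and $e_2$ components, invoke the known complex result on each, and recombine. Write $\alpha=\alpha_1e_1+\alpha_2e_2$, $m=m_1e_1+m_2e_2$, $n=n_1e_1+n_2e_2$, $l=l_1e_1+l_2e_2$, $r=r_1e_1+r_2e_2$ and $f=f_1e_1+f_2e_2$ with $f_1,f_2\in C(a,b)$. By the Lemma on real parts, the hypotheses $|\operatorname{Im_j}(\alpha)|<\operatorname{Re}(\alpha)$, $|\operatorname{Im_j}(m)|<\operatorname{Re}(m)$ and $|\operatorname{Im_j}(n)|<\operatorname{Re}(n)$ amount to $\operatorname{Re}(\alpha_i),\operatorname{Re}(m_i),\operatorname{Re}(n_i)>0$ for $i=1,2$. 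These are exactly the conditions required in the complex theory.

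\textbf{Step 1: decompose the operators.} The bicomplex Riemann--Liouville derivative acts componentwise, $\mathbb{D}^{\alpha}_{a^+}g=(\mathbb{D}^{\alpha_1}_{a^+}g_1)e_1+(\mathbb{D}^{\alpha_2}_{a^+}g_2)e_2$; this was already used in the proof of Theorem \ref{th:frac pr}. Combining this with the idempotent form \eqref{eq:bc p int idem} of the Prabhakar integral gives
\begin{equation*}
\mathbb{D}^{\alpha}_{a^+}\mathbb{E}^{l}_{m,n,r,a^+}f=\big(\mathbb{D}^{\alpha_1}_{a^+}\mathbb{E}^{l_1}_{m_1,n_1,r_1,a^+}f_1\big)e_1+\big(\mathbb{D}^{\alpha_2}_{a^+}\mathbb{E}^{l_2}_{m_2,n_2,r_2,a^+}f_2\big)e_2 .
\end{equation*}

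\textbf{Step 2: apply the complex composition formula.} On each component I would invoke the complex result of Kilbas et al.\ \cite{kilbas2004generalized} (Theorem 7 there): $\mathbb{D}^{\alpha_i}_{a^+}\mathbb{E}^{l_i}_{m_i,n_i,r_i,a^+}f_i=\mathbb{E}^{l_i}_{m_i,n_i-\alpha_i,r_i,a^+}f_i$ for continuous $f_i$. The main subtlety is the parameter range. When $\operatorname{Re}(n_i-\alpha_i)\le 0$, the right-hand side must be read as in the complex paper, namely as a Prabhakar operator with the weakly singular kernel interpreted through the derivative. I would take the complex statement exactly as it is given there, with its conditions, and transfer those conditions back to bicomplex form via the Lemma.

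If a self-contained argument were preferred instead, I would write $\alpha_i$ as $k_i-(k_i-\alpha_i)$ with $k_i=\lceil\operatorname{Re}\alpha_i\rceil$. The fractional-integral part is then absorbed by the previous theorem, giving $\mathbb{I}^{k_i-\alpha_i}\mathbb{E}^{l_i}_{m_i,n_i}=\mathbb{E}^{l_i}_{m_i,n_i+k_i-\alpha_i}$. After that I would differentiate $k_i$ times under the integral, using the first-order identity $\frac{d}{dx}\big[x^{n-1}\mathbb{E}^l_{m,n}(rx^m)\big]=x^{n-2}\mathbb{E}^l_{m,n-1}(rx^m)$. That identity is part 2 of Theorem \ref{th:frac pr} with $\alpha=1$. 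The boundary terms vanish whenever $\operatorname{Re}$ of the current second parameter exceeds $1$.

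\textbf{Step 3: recombine, and treat the particular case.} Reassembling the two components gives $\mathbb{E}^{l_1}_{m_1,n_1-\alpha_1,r_1,a^+}f_1\,e_1+\mathbb{E}^{l_2}_{m_2,n_2-\alpha_2,r_2,a^+}f_2\,e_2=\mathbb{E}^{l}_{m,n-\alpha,r,a^+}f$. For the particular case, take $\alpha=k\in\mathbb{N}$, so $\alpha_1=\alpha_2=k$ and $\mathbb{D}^k_{a^+}=(d/dx)^k$. The hypothesis $|\operatorname{Im_j}(n)|<\operatorname{Re}(n)-k$ is equivalent to $\operatorname{Re}(n_i)>k$, as shown in the proof of the Lemma, where $p_0\pm p_3>k$. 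This guarantees that every boundary term produced by differentiating under the integral vanishes, and that $n-k$ still satisfies the admissibility condition. I expect this justification of differentiating under the integral sign, together with the vanishing of the boundary terms, to be the only real obstacle. Everything else is bookkeeping with the idempotent representation.
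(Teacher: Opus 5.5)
Your proposal is correct and follows essentially the same approach as the paper. The paper gives no separate proof: it states the result ``similarly'' to the preceding composition theorem, citing Kilbas--Saigo--Saxena. That amounts to splitting into idempotent components, applying $\mathbb{D}^{\alpha_i}_{a^+}\mathbb{E}^{l_i}_{m_i,n_i,r_i,a^+}f_i=\mathbb{E}^{l_i}_{m_i,n_i-\alpha_i,r_i,a^+}f_i$ on each component, and recombining, exactly as in your Steps 1--3. Your optional self-contained route via $\alpha_i=k_i-(k_i-\alpha_i)$ only works when $\operatorname{Re}(n_i)>\operatorname{Re}(\alpha_i)$; the same condition is needed for the right-hand side to be a genuine integral operator, and the paper's statement leaves it implicit.
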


\noindent Here we construct the left inversion operator to the  operator $\mathbb{E}^{l}_{m,n,r,a+}.$
\begin{theorem}[Left Inversion Theorem]
	Let $ m,n,l,r\in\mathbb{T}, ~|\operatorname{Im_j}(m )|<\operatorname{Re}(m),~|\operatorname{Im_j}(n) |<\operatorname{Re}(n)$. Then the operator $\mathbb{E}^{l}_{m,n,r,a^+}$ is invertible in the space $L(a, b)$ and for $f =f_1e_1+f_2e_2\in L_{\mathbb{T}}(a, b)$, its left inverse is given by
	\[
	\left( \left[ \mathbb{E}^{l}_{m,n,r,a^+} \right]^{-1} f \right)(x) = \left( \mathbb{D}^{n+\nu}_{a^+} \, \mathbb{E}^{-l}_{m,\nu,r,a^+} f \right)(x),
	\]
	where $\nu \in \mathbb{T}$ and $|\operatorname{Im_j}(\nu) |<\operatorname{Re}(\nu)$.
\end{theorem}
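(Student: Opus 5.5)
We prove the identity $\mathbb{D}^{n+\nu}_{a^+}\mathbb{E}^{-l}_{m,\nu,r,a^+}\mathbb{E}^{l}_{m,n,r,a^+}f=f$ for every $f\in L_{\mathbb{T}}(a,b)$. This exhibits the stated operator as a left inverse of $\mathbb{E}^{l}_{m,n,r,a^+}$, and the identity also gives injectivity on $L_{\mathbb{T}}(a,b)$.

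The argument works entirely at the bicomplex level, using results already established.
\begin{enumerate}
\item Apply the semigroup property (\ref{eq: comp1}) with the roles ordered as $\mathbb{E}^{-l}_{m,\nu,r,a^+}\mathbb{E}^{l}_{m,n,r,a^+}$, i.e.\ with parameters $(-l,\nu)$ outside and $(l,n)$ inside. It gives
\[
\mathbb{E}^{-l}_{m,\nu,r,a^+}\mathbb{E}^{l}_{m,n,r,a^+}f\equiv\mathbb{E}^{0}_{m,n+\nu,r,a^+}f .
\]
By (\ref{eq:bc relation}) the right-hand side equals $\mathbb{I}^{n+\nu}_{a^+}f$. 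This is just (\ref{eq: comp2}) with $l$ replaced by $-l$ and the roles of $n$ and $\nu$ interchanged; (\ref{eq: comp1}) is stated for arbitrary $l,\sigma\in\mathbb{T}$, so the substitution is legitimate.
\item Check that $n+\nu$ satisfies the admissibility condition. Since $|\operatorname{Im_j}(n)|<\operatorname{Re}(n)$ and $|\operatorname{Im_j}(\nu)|<\operatorname{Re}(\nu)$, the triangle inequality gives $|\operatorname{Im_j}(n+\nu)|<\operatorname{Re}(n+\nu)$. Equivalently, by the preceding Lemma, the idempotent components satisfy $\operatorname{Re}(n_i+\nu_i)>0$ for $i=1,2$.
\item Apply $\mathbb{D}^{n+\nu}_{a^+}$ to both sides and use the bicomplex left-inverse property $\mathbb{D}^{\beta}_{a^+}\mathbb{I}^{\beta}_{a^+}f=f$ for $f\in L_{\mathbb{T}}(a,b)$, taking $\beta=n+\nu$.
\end{enumerate}

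The only substantive point is the identity $\mathbb{D}^{\beta}_{a^+}\mathbb{I}^{\beta}_{a^+}f=f$ in the bicomplex setting. To justify it, write $f=f_1e_1+f_2e_2$ with $f_i\in L(a,b)$ and use the idempotent splitting of the bicomplex Riemann–Liouville operators \cite{goswami2022riemann}:
\[
\mathbb{D}^{\beta}_{a^+}\mathbb{I}^{\beta}_{a^+}f=(D^{\beta_1}_{a^+}I^{\beta_1}_{a^+}f_1)\,e_1+(D^{\beta_2}_{a^+}I^{\beta_2}_{a^+}f_2)\,e_2 .
\]
Each component equals $f_i$ by the classical result \cite{kilbas2006}, which needs only $\operatorname{Re}(\beta_i)>0$ and $f_i\in L(a,b)$. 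This is the main point where care is needed. The bicomplex derivative order uses $k=\lceil\operatorname{Re}\beta_1\rceil e_1+\lceil\operatorname{Re}\beta_2\rceil e_2$, so the integer orders may differ between the two components. This causes no difficulty because everything is carried out componentwise.

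Recombining the components by the Ringleb decomposition yields $f$. Chaining this with step 1 gives the claimed formula for $\left[\mathbb{E}^{l}_{m,n,r,a^+}\right]^{-1}$.

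An equivalent route is to work componentwise from the start: invoke the complex left-inversion theorem of \cite{kilbas2004generalized} for $(m_i,n_i,l_i,r_i,\nu_i)$, $i=1,2$, and then recombine with $e_1,e_2$.
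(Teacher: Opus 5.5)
Your proposal is correct and follows essentially the same route as the paper. The paper also uses the semigroup property to collapse $\mathbb{E}^{-l}_{m,\nu,r,a^+}\mathbb{E}^{l}_{m,n,r,a^+}$ to $\mathbb{I}^{n+\nu}_{a^+}$, and then applies the classical identity $\mathbb{D}^{n_i+\nu_i}_{a^+}\mathbb{I}^{n_i+\nu_i}_{a^+}\psi_i=\psi_i$ componentwise before recombining with $e_1,e_2$. Your explicit checks, namely that the semigroup law is used with $l\mapsto -l$ and $n\leftrightarrow\nu$, that $n+\nu$ is admissible, and that the integer orders may differ between components, are added care that the paper leaves implicit.
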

\begin{proof}
	let us first define the operator $\left[ \mathbb{E}^{l}_{m,n,r,a+} \right]^{-1}$ such that 
	\begin{equation}\label{eq: operator}	
	\left( \mathbb{E}^{l}_{m,n,r,a^+} \psi \right)(x) = f(x), \quad\psi =\psi_1e_1+\psi_2e_2\in L_{\mathbb{T}}(a,b).
	\end{equation}

	By the Theorem \ref{th:bounded}, $\mathbb{E}^{l}_{m,n,r,a^+}\psi \in L_{\mathbb{T}}(a, b),$ hence $f \in L_{\mathbb{T}}(a, b)$.

	 Apply the operator $\mathbb{E}^{-l}_{m,\nu,r,a^+}$ to both sides of the equation \eqref{eq: operator}
	\[
	\mathbb{E}^{-l}_{m,\nu,r,a^+} f(x) = \mathbb{E}^{-l}_{m,\nu,r,a^+} \mathbb{E}^{l}_{m,n,r,a^+} \psi(x),
	\]
	where  $\nu \in \mathbb{T}$ and $|\operatorname{Im_j}(\nu) |<\operatorname{Re}(\nu)$.
	
	By using the semi group property  \eqref{eq: comp2}, we get

	\[ 	\mathbb{E}^{-l}_{m,\nu,r,a^+} f(x) = \left( \mathbb{I}^{n+\nu}_{a^+} \psi \right)(x).
	\]
	Now, applying the Riemann-Liouville fractional derivative $\mathbb{D}^{n+\nu}_{a^+}$   and using the  result \cite[Lemma 3, equation (3.9)]{kilbas2004generalized}, we obtain
    \begin{equation}
\begin{split}
	\mathbb{D}^{n+\nu}_{a^+} \mathbb{E}^{-l}_{m,\nu,r,a^+} f(x)&=
		\mathbb{D}^{n_1+\nu_1}_{a^+} \mathbb{E}^{-l_1}_{m_1,\nu_1,r_1,a^+} f_1(x) e_1+	\mathbb{D}^{n_2+\nu_2}_{a^+} \mathbb{E}^{-l_2}_{m_2,\nu_2,r_2,a^+} f_2(x)e_2\\
		&=\mathbb{D}^{n_1+\nu_1}_{a^+} \mathbb{I}^{n_1+\nu_1}_{a^+} \psi_1 (x) e_1+	\mathbb{D}^{n_2+\nu_2}_{a^+}  \mathbb{I}^{n_2+\nu_2}_{a^+} \psi_2 (x)e_2\\
		&=\psi_1(x)e_1+\psi_2(x)e_2\\
        &=\psi(x).
\end{split}
\end{equation}
	Thus, we have recovered $\psi$ from $f$ using the composition of operators:
	\[
	\psi(x) = \left( \mathbb{D}^{n+\nu}_{a+} \mathbb{E}^{-l}_{m,\nu,r,a^+} f \right)(x),
	\]
	which proves that $\left[ \mathbb{E}^{l}_{m,n,r,a^+}\right]^{-1} $ is left inversion of $\mathbb{E}^{l}_{m,n,r,a^+}$ in $L_{\mathbb{T}}(a,b).$ 
\end{proof}

\section{Laplace transform of fractional derivatives and application in solving Cauchy Problem } 
In this section we derive the Laplace transform of the bicomplex Prabhakar  integral, bicomplex Prabhakar  derivative, bicomplex regularized Prabhakar  derivative. The LT of  bicomplex Prabhakar derivative plays a fundamental role in simplifying the analysis of fractional differential equations.  In this section we use  the notation $\tilde{f}(\xi)$ to represent the Laplace transform of the function $f$.   
\begin{theorem}\label{th:lt   p int}
Let $f$ is  bicomplex valued function. For (i),(ii)  $f\in L_{\mathbb{T}}^1(a,b)$, and for (iii)  $f\in AC_\mathbb{T}^M(a,b),$	and $\xi,m,n,l,r \in \mathds{T} $ where $\xi=\xi_1e_1+\xi_2e_2$ $|\operatorname{Im_j}(\xi)|<\operatorname{Re}(\xi),~|r \xi^{-m}|_j\prec1,~ k=\lceil n \rceil=\lceil\operatorname{Re} (n_1)\rceil e_1+\lceil \operatorname{Re}(n_2)\rceil e_2\in \mathbb{D}$. Bicomplex
	LT of  the bicomplex fractional operators are    \\ 
 
(i) $\mathcal{L}\left(\mathbb{E}_{m,n,r,0^+}^lf(t) ;\xi\right)=\mathcal{L}\left( (f*e_{m,n,r}^l)(t) ;\xi\right) =\dfrac{\xi^{ml-n}}{(\xi^{m}-r)^{l}}\tilde{f}(\xi).$\\

(ii)$\mathcal{L}\left( \mathbb{D}^{l}_{m,n,r,0^+} f(t) ;\xi\right)
= \xi^{n - m l} \left(\xi^{m} - r\right)^{l} \tilde{f}(\xi)
- \sum_{p=0}^{k-1} \xi^{\,k-p-1}
\left( \mathbb{E}^{-l}_{m,k-n-p,r,0^+} f \right)(0^{+})$.  \\

(iii)	$\mathcal{L}\left( 	^C\mathbb{D}_{m,n,r,0^+}^l f(t);\xi\right)= \xi^{n-ml}\left(\xi^{m}-r\right)^{l}
	\left[
	\tilde{f}(\xi) - \sum_{p=0}^{k-1} \xi^{-p-1} f^{(p)}(0^{+})
	\right].$

    \end{theorem}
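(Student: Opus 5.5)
The plan is to work component-wise in the idempotent representation throughout, as in the earlier proofs. I will write $f=f_1e_1+f_2e_2$, $\xi=\xi_1e_1+\xi_2e_2$, and similarly for $m,n,l,r$. The bicomplex Laplace transform splits as $\mathcal{L}(f;\xi)=\mathcal{L}(f_1;\xi_1)e_1+\mathcal{L}(f_2;\xi_2)e_2$. The hypotheses $|\operatorname{Im_j}(\xi)|<\operatorname{Re}(\xi)$ and $|r\xi^{-m}|_j\prec 1$ translate into $\operatorname{Re}(\xi_i)>0$ and $|r_i\xi_i^{-m_i}|<1$ for $i=1,2$. Under these conditions the complex formulas for the Prabhakar kernel, namely the transform \eqref{eq:bclt 3para} in each component, are valid.

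For (i), I will apply the convolution theorem for the bicomplex Laplace transform (stated in the introduction) to $f*e^l_{m,n,r}$. Together with \eqref{eq:bclt 3para} this gives $\mathcal{L}(e^l_{m,n,r};\xi)=\xi^{ml-n}(\xi^m-r)^{-l}$, so (i) follows immediately. Equivalently, I can compute componentwise using \eqref{eq:bc p int idem} and recombine.

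For (ii), I will use the definition \eqref{eq:bc prabhakar der}, $\mathbb{D}^l_{m,n,r,0^+}f=\frac{d^k}{dt^k}g$ with $g=\mathbb{E}^{-l}_{m,k-n,r,0^+}f$. Since $k=\lceil\operatorname{Re}(n_1)\rceil e_1+\lceil\operatorname{Re}(n_2)\rceil e_2$ may have different integer components, I will carry out the argument separately in each component with $k_i=\lceil\operatorname{Re}(n_i)\rceil$. In each component I will use the classical rule
\[
\mathcal{L}(g_i^{(k_i)};\xi_i)=\xi_i^{k_i}\tilde g_i(\xi_i)-\sum_{p=0}^{k_i-1}\xi_i^{k_i-p-1}g_i^{(p)}(0^+),
\]
which is justified because $g_i\in W^{k_i,1}$ by hypothesis. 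By (i), $\tilde g_i=\xi_i^{-m_il_i-k_i+n_i}(\xi_i^{m_i}-r_i)^{l_i}\tilde f_i$, and multiplying by $\xi_i^{k_i}$ gives $\xi_i^{n_i-m_il_i}(\xi_i^{m_i}-r_i)^{l_i}\tilde f_i$. I will then identify $g_i^{(p)}$ with $\mathbb{E}^{-l_i}_{m_i,k_i-n_i-p,r_i,0^+}f_i$ using the composition theorem with $(d/dx)^p$ stated just before the Left Inversion Theorem. This is formally valid since $\operatorname{Re}(k_i-n_i)>p$ may fail, so I will interpret it via the series/complex result of Kilbas et al. Recombining with $e_1,e_2$ then gives (ii).

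For (iii), I will use the first line of \eqref{eq: bc reg der}, $^C\mathbb{D}^l f=\mathbb{E}^{-l}_{m,k-n,r,0^+}f^{(k)}$. Applying (i) gives $\xi^{n-k-ml}\cdot\xi^{ml}\ldots$; more precisely, it gives the factor $\xi^{-ml-(k-n)}(\xi^m-r)^{l}$ multiplied by $\mathcal{L}(f^{(k)})=\xi^k\tilde f-\sum_{p}\xi^{k-p-1}f^{(p)}(0^+)$. Factoring out $\xi^k$ yields the stated bracket, again componentwise.

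I expect the main obstacle to be the bookkeeping in (ii). First, $k$ is a hyperbolic (not real) integer, so the sums must be read componentwise. Second, the identification of the boundary terms $g^{(p)}(0^+)$ as Prabhakar integrals with lowered second parameter requires care when $\operatorname{Re}(k_i-n_i-p)\le 0$. I would handle this by invoking the complex result of Kilbas--Saigo--Saxena and of Polito--Tomovski for each component rather than re-deriving it.
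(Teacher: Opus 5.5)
Your proposal is correct and follows essentially the same route as the paper: split everything into idempotent components, use the convolution theorem together with the Laplace transform of the Prabhakar kernel for (i), and recombine with $e_1,e_2$. The paper only says that (ii) and (iii) follow ``similarly.'' Your componentwise use of $\mathcal{L}(g_i^{(k_i)};\xi_i)=\xi_i^{k_i}\tilde g_i(\xi_i)-\sum_{p=0}^{k_i-1}\xi_i^{k_i-p-1}g_i^{(p)}(0^+)$ is the intended filling-in, with the boundary terms $\mathbb{E}^{-l}_{m,k-n-p,r,0^+}f$ read as $g^{(p)}$; as you rightly flag, the composition rule does not literally apply when $\operatorname{Re}(k_i-n_i)\le p$.
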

    
\begin{proof}(i)
    With the Laplace transform of  Prabhakar derivative \cite[p.27]{giusti2020practical}
	and the result (\ref{eq:bc p int1}), we have
  \begin{equation}
	\begin{split}
	\mathcal{L}\left( \mathbb{E}_{m,n,r,0^+}^lf(t) ;\xi\right) &= \mathcal{L}\left( \mathbb{E}_{m_1,n_1,r_1,0^+}^{l_1}f_1(t) ;\xi_1\right) e_1+ \mathcal{L}\left(  \mathbb{E}_{m_2,n_2,r_2,0^+}^{l_2}f_2(t) ;\xi_2\right)e_2\\
	&=\mathcal{L}\left(  (f_1*e_{m_1,n_1,r_1}^{l_1})(t) ;\xi_1\right)e_1+\mathcal{L}\left(  (f_2*e_{m_2,n_2,r_2}^{l_2})(t) ;\xi_2\right)e_2\\
	&=\mathcal{L}\left(  e_{m_1,n_1,r_1}^{l_1}(t) ;\xi_1\right)\tilde{f_1}(\xi_1)e_1+\mathcal{L}\left(  e_{m_2,n_2,r_2}^{l_2}(t) ;\xi_2\right)\tilde{f_2}(\xi_2)e_2\\
	&=\frac{\xi_1^{m_1l_1-n_1}}{(\xi_1^{m_1}-r_1)^{l_1}}\tilde{f_1}(\xi_1)e_1+\frac{\xi_2^{m_2l_2-n_2}}{(\xi_2^{m_2}-r_2)^{l_2}}\tilde{f_2}(\xi_2)e_2\\
	&=\frac{\xi^{ml-n}}{(\xi^{m}-r)^{l}}\tilde{f}(\xi),
	\end{split}
	\end{equation}
    for $|\operatorname{Im_j}(\xi)|<\operatorname{Re}(\xi),~|r \xi^{-m}|_j\prec1.$
\end{proof}
  Similarly, (ii) and (iii) can be proved.  

The regularized Prabhakar derivative naturally arises in modeling systems  
with distributed memory kernels, such as viscoelastic materials with hierarchical  
relaxation processes or multi-scale diffusion in heterogeneous media.  
The parameter l controls the degree of memory distribution, with l=0  
recovering the classical Riemann-Liouville case.

Consider the  Cauchy problem involving the regularized Prabhakar derivative. In the following theorem, we find the solution of Cauchy problem in the bicomplex space. 
\begin{theorem}\label{th:cauchygen}
Let $f\in  L_{\mathbb{T}}[a, b] $ be a bicomplex valued function satisfying the Cauchy problem 
\begin{equation} 
\begin{cases}
^{C}\mathbb{D}^{l}_{m,n,r,0^+} \, f(t) = A f(t), \\[6pt]
f^{(k)}(0^{+}) = \tau_{k}, \quad k=0,1,\dots,m-1,
\end{cases}
\end{equation} 
where
$m,n,r,A, \tau_{0},\dots,\tau_{m-1} \in \mathbb{T},|\operatorname{Im_j}(m)|<\operatorname{Re}(m),~|\operatorname{Im_j}(n)|<\operatorname{Re}(n),~k=\lceil n \rceil=\lceil\operatorname{Re} (n_1)\rceil e_1+\lceil \operatorname{Re}(n_2)\rceil e_2,~l=l_1e_1+l_2e_2\in \mathbb{D}$.
\end{theorem}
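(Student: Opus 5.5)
The statement does not display the solution formula, so the plan is to derive it. The natural candidate is
\[
f(t)=\sum_{p=0}^{k-1}\tau_p \sum_{u=0}^{\infty} A^{u}\, t^{nu+p}\,\mathbb{E}^{lu}_{m,\,nu+p+1}\big(r t^{m}\big),
\]
which is the bicomplex analogue of the known complex solution of the Prabhakar Cauchy problem. I would obtain this by the Laplace transform method, working component-wise through the idempotent decomposition, as in the earlier proofs.

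\textbf{Step 1: transform the equation.} Apply the bicomplex Laplace transform to both sides of the differential equation and use part (iii) of Theorem~\ref{th:lt   p int}. With $\tilde f=\mathcal{L}(f;\xi)$ this gives
\[
\xi^{n-ml}(\xi^{m}-r)^{l}\Big[\tilde f(\xi)-\sum_{p=0}^{k-1}\xi^{-p-1}\tau_p\Big]=A\tilde f(\xi).
\]
Solving for $\tilde f$ yields
\[
\tilde f(\xi)=\sum_{p=0}^{k-1}\tau_p\,\frac{\xi^{-p-1}}{1-A\,\xi^{ml-n}(\xi^{m}-r)^{-l}}.
\]
Dividing requires the denominator to be invertible in $\mathbb{T}$, i.e.\ not a zero divisor. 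I would guarantee this by assuming $|A\,\xi^{ml-n}(\xi^m-r)^{-l}|_j\prec1$, which means both idempotent components have modulus less than $1$. This holds for $\operatorname{Re}(\xi)$ large.

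\textbf{Step 2: expand and invert.} Expand the denominator as a geometric series:
\[
\tilde f(\xi)=\sum_{p=0}^{k-1}\tau_p\sum_{u=0}^{\infty}A^{u}\,\frac{\xi^{mlu-(nu+p+1)}}{(\xi^{m}-r)^{lu}}.
\]
Each term matches formula \eqref{eq:bclt 3para} with $l\to lu$ and $n\to nu+p+1$, so its inverse is $t^{nu+p}\mathbb{E}^{lu}_{m,nu+p+1}(rt^m)$. The condition $|\operatorname{Im_j}(nu+p+1)|<\operatorname{Re}(nu+p+1)$ needed there follows from the hypothesis on $n$ together with the Lemma on real parts of the idempotent components. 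Inverting term by term gives the formula above.

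\textbf{Step 3: justify term-by-term inversion.} Split everything into the $e_1$ and $e_2$ components. Each component is then the classical complex Cauchy problem for the regularized Prabhakar derivative. There the series converges locally uniformly and Laplace inversion term by term is legitimate, since the partial sums are dominated by a Mittag-Leffler-type majorant of exponential order. Recombining with the Ringleb decomposition yields the bicomplex solution.

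Two points need care. First, the statement indexes the initial conditions up to $m-1$, whereas the derivative's order is governed by $k=\lceil n\rceil$; I would read the number of initial data as $k$, component-wise, so each component may carry a different number of conditions. Second, and this is the main obstacle, the exponential-order and convergence bookkeeping in Step 3 must be handled in each idempotent component separately. I would first try to bound $|t^{nu+p}\mathbb{E}^{lu}_{m,nu+p+1}(rt^m)|$ in each component by $C\,t^{\operatorname{Re}(n_i)u+p}/|\Gamma(n_iu+p+1)|$ times an exponential, and then sum over $u$.
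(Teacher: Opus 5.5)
Your proposal is correct and follows essentially the same route as the paper. Both arguments apply part (iii) of the Laplace transform theorem, expand $1/\bigl(1-A\,\xi^{ml-n}(\xi^m-r)^{-l}\bigr)$ as a geometric series for large idempotent components of $\xi$, and invert term by term using the Laplace transform of the bicomplex Prabhakar kernel, arriving at the same double series $\sum_{p}\sum_{u}A^{u}t^{nu+p}\mathbb{E}^{lu}_{m,nu+p+1}(rt^m)\tau_p$. Your upper limit $k-1$ is the sensible reading of the paper's $m-1$. Your Step 3, which justifies the term-by-term inversion componentwise, is something the paper simply asserts.
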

\begin{proof}
For the Cauchy problem \begin{equation} \label{eq:5.14}
^{C}\mathbb{D}^{l}_{m,n,r,0^+} \, f(t) = A f(t)
\end{equation} 
    	Applying the Laplace transform to \eqref{eq:5.14} gives
	\[
	\xi^{n-ml}(\xi^{m}-r)^{l}
	\left(\tilde{f}(\xi) - \sum_{k=0}^{m-1}\xi^{-k-1}\tau_j\right)
	= A\,\tilde{f}(\xi).
	\]
	Rearranging, we find
	\begin{equation}\label{eq:lcauchy}  
	\tilde{f}(\xi)
	= \frac{\xi^{n-ml}(\xi^{m}-r)^{l}
		\sum_{k=0}^{m-1}\xi^{-k-1}\tau_k}
	{\xi^{n-ml}(\xi^{m}-r)^{l}-A}.	\end{equation}
	We can rewrite \eqref{eq:lcauchy} as
	\[
	\tilde{f}(\xi)
	= \sum_{k=0}^{m-1}\xi^{-k-1}\tau_k
	\cdot \frac{1}{1-\dfrac{A}{\xi ^{n-ml}(\xi^{m}-r)^{l}}}.
	\]
By braking up into idempotent components then for sufficiently large $\operatorname{\xi_i},~(i=1,2)$ we can expand the denominator as a Neumann (geometric) series and then combining the  idempotent components we get the following result

	\[
	\frac{1}{1-\dfrac{A}{\xi^{n-ml}(\xi^{m}-r)^{l}}}
	= \sum_{k=0}^{\infty}A^k\,\xi^{-k(n-ml)}(\xi^{m}-r)^{-kl}, \quad 
\left| \frac{A}{\xi^{\,n - m l} \left( \xi^{m} - r \right)^{l}} \right| \prec 1.	\]
	Substituting into \(\tilde{f}(\xi)\) gives
	\begin{equation}
	\tilde{f}(\xi)
	= \sum_{k=0}^{m-1}\sum_{j=0}^{\infty}
	A^k\,\xi^{-k-1-j(n-ml)}(\xi^{m}-r)^{-jl}\tau_k.
	\label{eq:complex-514}
	\end{equation}

By termwise Laplace inversion, the analytic solution of \eqref{eq:complex-514} in the bicomplex domain is
	\begin{equation}
		f(t)
		= \sum_{k=0}^{m-1}\sum_{j=0}^{\infty}
		A^{j}t^{n j + k}
		\mathbb{E}^{jl}_{m,n j + k +1}(r t^{m})\,\tau_k.
	\end{equation}
    \end{proof}

\begin{theorem}\label{th:cauchy-nonhomo}
    Let $f,g\in  L_{\mathbb{T}}[a, b] $ and of exponential order, be bicomplex valued functions and
\begin{equation}\label{cauchy1}
{}^{C}\mathbb{D}^{l}_{m,n,r,0^+} f(t)+kf(t) = g(t), 
\qquad f(0^+) = \tau_0,
\end{equation}
where $k$ and $\tau_0$ are constants and the parameters satisfy 
$m,n,r \in \mathbb{T},~|\operatorname{Im_j}(m)|<\operatorname{Re}(m),~|\operatorname{Im_j}(n)|<\operatorname{Re}(n)$ and $l\in \mathbb{D},~l=l_1e_1+l_2e_2, ~0<l_1,l_2<1.$

The integral is taken from the lower limit $0^+$. Then
\begin{equation}
f(t)= (h *g)(t)+\tau_{0}\mathcal{L}^{-1}\Big\{\frac{\xi^{n-1}(1 - r \xi^{-m})^{l}}{\xi^{n}(1 - r \xi^{-m})^{l} + k}\Big\}(t),
\end{equation}
where 
\[
h(t) = \mathcal{L}^{-1}\left\{\frac{1}{\xi^{n}\big(1 - r\xi^{-m}\big)^{l} + k}\right\}(t),\]

\end{theorem}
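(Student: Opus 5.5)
Apply the bicomplex Laplace transform to both sides of \eqref{cauchy1}, using part (iii) of Theorem \ref{th:lt   p int} for the regularized derivative. With $f(0^+)=\tau_0$ and only the term $p=0$ present, this gives
\[
\xi^{n-ml}(\xi^{m}-r)^{l}\left[\tilde f(\xi)-\xi^{-1}\tau_0\right]+k\tilde f(\xi)=\tilde g(\xi).
\]
Linearity of the transform is used here; it follows componentwise from the idempotent decomposition, exactly as in Theorem \ref{th:learity1}.

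Next rewrite the symbol. In each idempotent component $i=1,2$ we have $\xi_i^{n_i-m_il_i}(\xi_i^{m_i}-r_i)^{l_i}=\xi_i^{n_i}(1-r_i\xi_i^{-m_i})^{l_i}$. This identity holds on the principal branch when $|r_i\xi_i^{-m_i}|<1$, which is the hypothesis $|r\xi^{-m}|_j\prec1$. Recombining the components gives $\Phi(\xi):=\xi^{n}(1-r\xi^{-m})^{l}$. Solving the algebraic equation for $\tilde f$ then gives
\[
\tilde f(\xi)=\frac{\tilde g(\xi)}{\Phi(\xi)+k}+\tau_0\,\frac{\xi^{n-1}(1-r\xi^{-m})^{l}}{\Phi(\xi)+k}.
\]
This division must be justified in the bicomplex algebra. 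I plan to do so componentwise: $\Phi(\xi)+k$ is invertible exactly when both components $\xi_i^{n_i}(1-r_i\xi_i^{-m_i})^{l_i}+k_i$ are nonzero. That holds for $\operatorname{Re}(\xi_i)$ large enough, since the first term grows like $|\xi_i|^{\operatorname{Re}(n_i)}$ with $\operatorname{Re}(n_i)>0$.

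Then invert the transform. Set $\tilde h=1/(\Phi+k)$. The first term is $\tilde h\,\tilde g$, and the bicomplex convolution theorem of \cite{ragarwal2014a} (the Theorem quoted in the introduction) identifies it as the transform of $(h*g)(t)$. This requires that $h$ and $g$ be of exponential order; $g$ is so by hypothesis. The second term is inverted directly, which gives the formula stated in the theorem.

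The main obstacle is showing that $h=\mathcal L^{-1}\{\tilde h\}$ exists as a locally integrable function of exponential order, so that the convolution theorem applies. My first approach is to work componentwise. For each $i$, expand $\tilde h_i=\xi_i^{-n_i}(1-r_i\xi_i^{-m_i})^{-l_i}\big(1+k_i\xi_i^{-n_i}(1-r_i\xi_i^{-m_i})^{-l_i}\big)^{-1}$ as a geometric series, valid for large $\operatorname{Re}\xi_i$ as in the proof of Theorem \ref{th:cauchygen}. Its terms $(-k_i)^j\xi_i^{-(j+1)n_i}(1-r_i\xi_i^{-m_i})^{-(j+1)l_i}$ are, by \eqref{eq:bclt 3para}, the transforms of $t^{(j+1)n_i-1}\mathbb E^{(j+1)l_i}_{m_i,(j+1)n_i}(r_it^{m_i})$. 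Termwise inversion then yields a convergent series for $h_i$ with exponential-order bounds, and recombining with $e_1,e_2$ (Ringleb decomposition) gives $h$. The restriction $0<l_i<1$ is where I expect care to be needed, namely to keep the branch of $(1-r\xi^{-m})^{l}$ well defined and to make the second inverse transform exist. If this step fails, I would simply assume that $h$ exists as part of the statement.
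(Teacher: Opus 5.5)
Your proposal is correct and follows the paper's proof essentially step for step. You Laplace-transform the equation with the regularized-derivative formula, keeping only the $p=0$ term; like the paper, this tacitly requires $\lceil n\rceil=1$. You then solve for $\tilde f$ with $H(\xi)=1/\bigl(\xi^{n}(1-r\xi^{-m})^{l}+k\bigr)$ and invert, using the convolution theorem for $H\tilde g$. Your componentwise invertibility check and the Neumann-series construction of $h$ add rigor that the paper's bare termwise inversion skips.

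One small correction concerns the rewriting $\xi_i^{n_i-m_il_i}(\xi_i^{m_i}-r_i)^{l_i}=\xi_i^{n_i}(1-r_i\xi_i^{-m_i})^{l_i}$. The condition $|r_i\xi_i^{-m_i}|<1$ alone does not make it valid on principal branches: for non-real $m_i$ and large real $\xi_i$, $\operatorname{Im}(m_i\log\xi_i)$ leaves $(-\pi,\pi]$, and since $l_i\notin\mathbb{Z}$ the two sides then differ. Instead, take the transform directly in the form $\xi^{n}(1-r\xi^{-m})^{l}\bigl[\tilde f(\xi)-\xi^{-1}\tau_0\bigr]$. This is what the binomial series produces, and it is the form the paper's proof uses.
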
	
    \begin{proof}
        	Taking the Laplace transform of \eqref{cauchy1} and using the formula  
	\[
	\mathcal{L}({}^{C}\mathbb{D}^{l}_{m,n,r,0^+} f(t) ;\xi)
	= \xi^{n}(1 - r \xi^{-m})^{l}\tilde{f}(\xi) - \xi^{n-1}(1 - r \xi^{-m})^{l}f(0^+),
	\]
	we obtain
	\[
	\xi^{n}(1 - r \xi^{-m})^{l}\tilde{f}(\xi) - \xi^{n-1}(1 - r \xi^{-m})^{l}\tau_0 + k \tilde{f}(\xi) = \tilde{g}(\xi).
	\]
 Rearranging gives
	\[
	\tilde{f}(\xi)[\xi^{n}(1 - r \xi^{-m})^{l} + k]
	= \xi^{n-1}(1 - r \xi^{-m})^{l}\tau_0 + \tilde{g}(\xi).
	\]
    \[
H(\xi) := \frac{1}{\xi^{n}\big(1 - r\xi^{-m}\big)^{l} + k}.
\]

Then we have
\[
\widetilde{f}(\xi)
= H(\xi)\,\widetilde{g}(\xi)
+ \tau_{0}\,\xi^{n-1}\big(1 - r\xi^{-m}\big)^{l}H(\xi).
\]

Taking the inverse Laplace transform termwise gives
\[
f(t)= (h *g)(t)+\tau_{0}\mathcal{L}^{-1}\Big\{\frac{\xi^{n-1}(1 - r \xi^{-m})^{l}}{\xi^{n}(1 - r \xi^{-m})^{l} + k}\Big\}(t),
\]
where, $h(t) = \mathcal{L}^{-1}\{H(\xi)\}(t),
~
g(t) = \mathcal{L}^{-1}\{\widetilde{g}(\xi)\}(t),~
(h * g)(t) = \int_{0}^{t} h(t - s)\,g(s)\,ds.
$

    \end{proof}

 For $k=0$ in the above theorem, $h(t)=[t^{n-1}\mathbb{E}_{ m,n}^l(r t^{ m})]$ and we obtain the following corollary as a special case.   
\begin{corollary}\label{th:cauchy-nonhomocor}
    Let $f,g\in  L_{\mathbb{T}}[a, b] $ and of exponential order, be bicomplex valued functions and
\begin{equation}\label{cauchy}
{}^{C}\mathbb{D}^{l}_{m,n,r,0^+} f(t) = g(t), 
\qquad f(0^+) = \tau_0,
\end{equation}
where the parameters satisfy 
$m,n,r \in \mathbb{T},~|\operatorname{Im_j}(m)|<\operatorname{Re}(m),~|\operatorname{Im_j}(n)|<\operatorname{Re}(n)$ and $l\in \mathbb{D},~l=l_1e_1+l_2e_2, ~0<l_1,l_2<1.$

The integral is taken from the lower limit $0^+$. Then
\begin{equation}
{f}(t) = \tau_0 +[t^{n-1}\mathbb{E}_{ m,n}^l(r t^{ m})]*g(t).
\end{equation}

\end{corollary}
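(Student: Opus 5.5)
The plan is to specialize Theorem \ref{th:cauchy-nonhomo} to $k=0$ and identify the two inverse Laplace transforms in closed form. With $k=0$, the function $H$ from the proof of Theorem \ref{th:cauchy-nonhomo} becomes
\[
H(\xi)=\frac{1}{\xi^{n}(1-r\xi^{-m})^{l}}=\xi^{-n}(1-r\xi^{-m})^{-l}.
\]
The Laplace-domain identity then reads
\[
\tilde f(\xi)=H(\xi)\tilde g(\xi)+\tau_0\,\xi^{n-1}(1-r\xi^{-m})^{l}H(\xi).
\]
All that remains is to recognise $H$ and the second term as transforms of explicit functions.

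For the second term, the factors $\xi^{n}(1-r\xi^{-m})^{l}$ cancel, which leaves $\tau_0\xi^{-1}$. I will check this cancellation componentwise in the idempotent representation, so that no division by a zero divisor occurs. Indeed $|r\xi^{-m}|_j\prec1$ guarantees that $1-r_i\xi_i^{-m_i}\neq0$ and $\xi_i\ne0$ for $i=1,2$. Since $\mathcal{L}(1;\xi)=\xi^{-1}$ for $|\operatorname{Im_j}(\xi)|<\operatorname{Re}(\xi)$, which is the componentwise statement $\operatorname{Re}(\xi_i)>0$, the second term inverts to the constant $\tau_0$.

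For the kernel, I rewrite
\[
\xi^{-n}(1-r\xi^{-m})^{-l}=\frac{\xi^{ml-n}}{(\xi^{m}-r)^{l}},
\]
again componentwise, using $(1-r_i\xi_i^{-m_i})^{-l_i}=\xi_i^{m_il_i}(\xi_i^{m_i}-r_i)^{-l_i}$ with principal branches. By the bicomplex Laplace transform of the Prabhakar function \eqref{eq:bclt 3para}, this is exactly $\mathcal{L}(t^{n-1}\mathbb{E}^{l}_{m,n}(rt^{m});\xi)$, under the same conditions $|\operatorname{Im_j}(n)|<\operatorname{Re}(n)$ and $|r\xi^{-m}|_j\prec1$. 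Hence
\[
h(t)=t^{n-1}\mathbb{E}^{l}_{m,n}(rt^{m})=e^{l}_{m,n,r}(t).
\]
By the bicomplex convolution theorem of Agarwal et al., $H\tilde g$ is the transform of $h*g$. Both $g$ and $h$ are of exponential order: $g$ by hypothesis, and $h$ componentwise by the standard growth of the complex Prabhakar kernel. Summing the two pieces gives $f=\tau_0+[t^{n-1}\mathbb{E}^{l}_{m,n}(rt^{m})]*g$.

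The step I expect to be the main obstacle is justifying the inversion: that equal bicomplex Laplace transforms force equal functions. I would handle this by splitting into idempotent components. Each $\mathcal{L}(\cdot;\xi_i)$ is a classical complex Laplace transform on a right half-plane, where Lerch's uniqueness theorem applies. I would take $\xi$ in the common region $\operatorname{Re}(\xi)>M+|\operatorname{Im_j}(\xi)|$ intersected with $|r\xi^{-m}|_j\prec1$; this region is nonempty because $\xi$ can be taken with large real components. The components are then recombined using $e_1+e_2=1$.
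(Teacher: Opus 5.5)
Your proposal is correct and follows the paper's own route. You set the coefficient $k=0$ in Theorem~\ref{th:cauchy-nonhomo}, recognise $H(\xi)=\xi^{-n}(1-r\xi^{-m})^{-l}$ as the transform of $t^{n-1}\mathbb{E}^{l}_{m,n}(rt^{m})$, and note that the initial-value term collapses to $\tau_0\xi^{-1}$; your componentwise cancellation, convolution-theorem and Lerch-uniqueness steps just supply details the paper leaves implicit.

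One small tidy-up. Termwise integration of the series gives the kernel's transform directly as $\xi_i^{-n_i}(1-r_i\xi_i^{-m_i})^{-l_i}$ in each idempotent component, which is exactly $H$. So you can drop the rewriting into $\xi^{ml-n}/(\xi^{m}-r)^{l}$, whose claimed validity ``with principal branches'' breaks down in general when $m_i$ is non-real and $l_i\in(0,1)$.
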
	




\section{Conclusion}
In this paper, bicomplex extension of the  Prabhakar  integral operator, Prabhakar derivative and regularized Prabhakar derivative are derived along with their properties. The Riemann–Liouville fractional integral and differential operators' compositions are determined using $\mathbb{E}^{l}_{m,n,r,a+}.$   Also fractional calculus of bicomplex Prabhakar function is studied.

In this work, we have derived the Laplace transform of the regularized Prabhakar fractional derivative to solve Cauchy problems, establishing a robust framework for handling fractional initial value problems. 
The results demonstrate the efficacy and applicability of this approach in modeling a wide class of fractional dynamic systems with non-local memory effects.  

Several promising avenues for future investigation emerge from this work. This framework can be extended to bicomplex k-parameter Prabhakar functions, enabling the solution of more generalized Cauchy-type fractional differential equations in multidimensional or coupled fractional systems. The results may be generalized to multicomplex spaces of dimension $2^n$ for $n > 2$, providing frameworks for higher-dimensional coupled systems.

We hope this work inspires mathematicians, physicists, and engineers to explore the rich landscape of bicomplex fractional calculus and contribute to its theoretical development and practical applications.

\section*{Declaration}
The authors declare that no funds, grants, or other support were received during the preparation of this manuscript.

\section*{Conflict of interest}
The authors affirm that there are no conflicts of interest related to this work.

\section*{Availability of data}
This study does not involve the use of any external data.

\section*{Authors’ contributions}
U.P.S.: Formal analysis and investigation, Writing - original draft, Conceptualization, Methodology;
R.A.: Conceptualization, Methodology, Writing - review and editing, Resources. All authors read and approved the final manuscript.

\bibliographystyle{apalike}
\bibliography{lref}

@book{kilbas2006,
  author    = {A. A. Kilbas and H. M. Srivastava and J. J. Trujillo},
  title     = {Theory and Applications of Fractional Differential Equations},
  publisher = {Elsevier},
  address   = {Amsterdam},
  year      = {2006}
}

@article{kumar2023generalization,
  author    = {R. Kumar and D. Kumar and S. K. Sharma},
  title     = {Generalization of Riemann-Liouville fractional operators in bicomplex space with applications to fractional differential equations},
  journal   = {Mathematics and Statistics},
  volume    = {11},
  number    = {5},
  pages     = {823--837},
  year      = {2023}
}

@article{thirumalai2025perception,
  author    = {S. Thirumalai and A. Muthunagai},
  title     = {A perception of the applications of bicomplex fractional {L}aplace transform},
  journal   = {Journal of Mathematics and Computer Science},
  volume    = {33},
  pages     = {294--311},
  year      = {2024}
}

@article{prabhakar1971singular,
  author    = {T. R. Prabhakar},
  title     = {A singular integral equation with a generalized Mittag-Leffler function in the kernel},
  journal   = {Yokohama Mathematical Journal},
  volume    = {19},
  pages     = {7--15},
  year      = {1971}
}

@book{titchmarsh1937introduction,
	title={Introduction to the theory of {F}ourier integral},
	author={Titchmarsh, Edward Charles},
	year={1937},
	publisher={The Clarendon Press}
}

@article{riley1953,
	author={J. D. Riley},
	year={1953},
	title={Contributions to the theory of functions of a bicomplex variable},
	number={2},
	pages={132-165},
	journal={ Tohoku Mathematical Journal},
	volume={5}
}

@article{csegre1892,
	author={C. Segre},
	year={1892},
	title={Le rappresentazioni reale delle forme complessee  {G}li {E}nti {I}peralgebrici},
	volume={40},
	number={},
	pages={413-467},
	journal={Math. Ann.},
}

@article{ringlab1933,
	author={ FRIEDRICH Ringleb},
	year={1933},
	title={Beitr\"{a}ge zur Funktionentheorie in hyperkomplexen Systemen},
	volume={57},
	pages={311-340},
	journal={I. Rendiconti del Circolo Matematico di Palermo},
}

@book{price1991,
	author={G. B. Price},
	year={1991},
	title={An Introduction to Multicomplex Spaces and Functions},
	publisher={Marcel Dekker Inc. New York.},
}

@book{kibas1993,
	author={Stefan G. Samko and Anatoly A. Kilbas and Oleg I. Marichev},
	year={1993},
	title={Fractional Integrals and 
 Derivatives:
	Theory and Applications},
	publisher={Gordon and Breach Science, Yverdon, Switzerland },
}

@book{miller1993introduction,
	title={An Introduction to the Fractional Calculus and Fractional Differential Equations},
	author={Miller, Kenneth S and Ross, Bertram},
	year={1993},
	publisher={Wiley, New York}
}

@article{kilbas1995fractional,
	title={Fractional integrals and derivatives of {M}ittag-{L}effler type functions},
	author={Kilbas, AA and Saigo, M},
	journal={Doklady Akademii Nauk Belarusi},
	volume={39},
	number={4},
	pages={22--26 (Russian)},
	year={1995},
}

@article{ronn2001,
	author={Stefan R\"{o}nn},
	year={2001},
	title={Bicomplex algebra and function theory},
	number={},
	pages={1-71},
	journal={arXiv:0101200v1 [Math.CV]},
	volume={},
}

@article{kilbas2004generalized,
	title={Generalized {M}ittag-{L}effler function and generalized fractional calculus operators},
	author={Kilbas, Anatoly A and Saigo, Megumi and Saxena, Ram K},
	journal={Integral Transforms and Special Functions},
	volume={15},
	number={1},
	pages={31--49},
	year={2004},
	publisher={Taylor \& Francis}
}

@article{rochon2004b,
	author={D. Rochon and M. Shapiro},
	year={2004},
	title={On algebraic properties of bicomplex and hyperbolic numbers},
	number={},
	pages={71-110},
	journal={Analele Universitatii din Oradea. Fascicola Matematica},
	volume={11},
}

@article{goyal2006,
	author={S. P. Goyal and Trilok Mathur And Ritu Goyal},
	year={2006},
	title={	Bicomplex gamma And beta Function },
	number={1},
	pages={131-142},
	journal={Journal of Rajasthan Academy Physical Sciences},
	volume={5},
}

@article{akumar2011a,
	author={A. Kumar and P. Kumar},
	year={2011},
	title={Bicomplex Version of {L}aplace Transform},
	number={3},
	pages={225-232},
	journal={International Journal of Engineering and Technology},
	volume={3}
}

@article{meluna2012,
	author={M. Elena Luna-Elizarrar\'{a}s and M. Shapiro and Daniele C. Struppa and A. Vajiac},
	year={2012},
	number={2},
	title={Bicomplex Numbers and their Elementary Functions},
	pages={61-80},
	journal={Cubo A Mathematical Journal},
	volume={14},
}

@article{ragarwal2014a,
	author={Ritu Agarwal and Mahesh Puri Goswami and Ravi P. Agarwal},
	year={2014},
	title={ CONVOLUTION THEOREM AND APPLICATIONS OF
	
	BICOMPLEX {L}APLACE TRANSFORM},
	number={1},	
	pages={113-127},
	journal={Advances in Mathematical
	Sciences and Applications
	},
	volume={24},
}

@article{garra2014hilfer,
  title={{H}ilfer-{P}rabhakar derivatives and some applications},
  author={Garra, Roberto and Gorenflo, Rudolf and Polito, Federico and Tomovski, {\v{Z}}ivorad},
  journal={Applied mathematics and computation},
  volume={242},
  pages={576--589},
  year={2014},
  publisher={Elsevier}
}

@book{gorenflo2014,
	author={Rudolf Gorenflo and Anatoly A. Kilbas and	Francesco Mainardi and Sergei V. Rogosin},
	year={2014},
	title={Mittag-{L}effler functions, Related Topics
	and Application},
	publisher={ Springer,  Berlin Heidelberg },
}

@article{polito2016some,
  title={Some Properties of {P}rabhakar-type Fractional Calculus Operators},
  author={Polito, Federico and Tomovski, Zivorad},
  journal={Fractional Differential Calculus},
  volume={6},
  number={1},
  pages={73--94},
  year={2016}
}

@article{G,
	author={Rudolf Gorenflo and Francesco Mainardi and Sergei V. Rogosin},
	year={2009},
	title={ Mittag-{L}effler Function: Properties and Applications},
	number={},
	pages={269-296},
	journal= {In Handbook of Fractional Calculus with Applications, Volume 1: Basic Theory 	},
	volume={A. Kochubei, Yu.Luchko Berlin/Boston. Series edited by J. A.Tenreiro Machado, },
}

@article{goswami2022riemann,
  title={{R}iemann--{L}iouville fractional operators of bicomplex order and its properties},
  author={Goswami, Mahesh Puri and Kumar, Raj},
  journal={Mathematical Methods in the Applied Sciences},
  volume={45},
  number={10},
  pages={5699--5720},
  year={2022},
  publisher={Wiley Online Library}
}

@article{goswami2021generalization,
  title={Generalization of {R}iemann-{L}iouville Fractional Operators in Bicomplex Space and Applications},
  author={Goswami, Mahesh Puri and Kumar, Raj},
  journal={Mathematics and Statistics},
volume={11},
  number={5},
  pages={802-815},
  year={2023}
}

@incollection{agarwal2023bicomplex,
	title={Bicomplex {M}ittag-{L}effler  function and applications in integral transform and fractional calculus},
	author={Agarwal, Ritu and Sharma, Urvashi Purohit},
	booktitle={Mathematical and Computational Intelligence to Socio-scientific Analytics and Applications},
	pages={157-167},
	year={2023},
	publisher={Springer}
}

@article{giusti2020practical,
	title={A practical guide to {P}rabhakar fractional calculus},
	author={Giusti, Andrea and Colombaro, Ivano and Garra, Roberto and Garrappa, Roberto and Polito, Federico and Popolizio, Marina and Mainardi, Francesco},
	journal={Fractional Calculus and Applied Analysis},
	volume={23},
	number={1},
	pages={9--54},
	year={2020},
	publisher={De Gruyter}
}

@article{ragarwal2022,
	author={Ritu Agarwal and Urvashi Purohit Sharma and Ravi P. Agarwal},
	year={2022},
	title={ Bicomplex {M}ittag-{L}effler Function and associated properties},
	number={},	
	pages={48-60},
	journal={  Journal of Nonlinear Sciences and Applications},
	volume={15},
}

@article{ragarwal2021sept,
	author={Urvashi Purohit Sharma and Ritu Agarwal  and  Kottakkaran  Sooppy Nisar},
	year={2022},
	title={ Bicomplex Two Parameter {M}ittag-{L}effler
	Function and Properties with application to the
	fractional time wave equation},
	number={1},	
	pages={462-481.},
	journal={ Palistine Journal of Mathematics },
	volume={12},
}

@article{ragarwal2022sept,
	author={  Urvashi Purohit Sharma and Ritu Agarwal},
	year={2022},
	title={Bicomplex {L}aplace Transform of Fractional Order, Properties and applications},
	number={1},	
	pages={370-385},
	journal={Journal of Computational Analysis and Applications },
	volume={30},
}

@article{ragarwal2022bicomplex,
	title={Bicomplex {L}andau and {I}kehara Theorems for the {D}irichlet Series},
	author={Agarwal, Ritu and Sharma, Urvashi Purohit and Agarwal, Ravi P and Suthar, Daya Lal and Purohit, Sunil Dutt},
	journal={Journal of Mathematics},
	volume={2022},
	pages={1-8},
	year={2022},
	publisher={Hindawi}
}

@inproceedings{ragarwal2023solution,
	title={Solution of Bicomplex Time Fractional {S}chr{\"o}dinger Equation Involving Bicomplex {M}ittag-{L}effler Function},
	author={Agarwal, Ritu and Sharma, Urvashi P and Agarwal, Ravi P},
	booktitle={International Conference on Mathematical Modelling, Applied Analysis and Computation},
	pages={14--30},
	year={2023},
	organization={Springer}
}

@article{toksoy2024geometrical,
  title={On geometrical characteristics and inequalities of new bicomplex {L}ebesgue Spaces with hyperbolic-valued norm},
  author={Toksoy, Erdem and Sa{\u{g}}{\i}r, Birsen},
  journal={Georgian Mathematical Journal},
  volume={31},
  number={3},
  pages={483--495},
  year={2024},
  publisher={De Gruyter}
}

@article{toksoy2024hyperbolic,
  title={Hyperbolic valued {B}eurling weighted bicomplex {L}ebesgue spaces and some of their geometric characteristics},
  author={Toksoy, Erdem},
  journal={Journal of Inequalities and Applications},
  volume={2024},
  number={1},
  pages={153},
  year={2024},
  publisher={Springer}
}

@article{dubey2014note,
  title={A note on bicomplex {O}rlicz spaces},
  author={Dubey, S and Kumar, R and Sharma, K},
  journal={arXiv preprint arXiv:1401.7112},
  pages={1--12},
  year={2014}
}

@article{ragarwal2025,
	author={Urvashi Purohit Sharma and Ritu Agarwal  },
	year={2025},
	title={ {P}rabhakar  function and unified fractional kinetic equation in bicomplex space },
	number={},	
	pages={},
	journal={ Submitted },
	volume={},
}

@article{bakhet2025new,
  title={On a new version of bicomplex  {M}ittag-{L}effler functions and their applications in fractional kinetic equations},
  author={Bakhet, Ahmed and Zayed, Mohra and Saleem, Mohammed A and Fathi, Mohamed},
  journal={Alexandria Engineering Journal},
  volume={125},
  pages={409-423},
  year={2025},
  publisher={Elsevier}
}

@article{bakhet2025bicomplex,
  title={Bicomplex k-{M}ittag-{L}effler Functions with Two Parameters: Theory and Applications to Fractional Kinetic Equations},
  author={Bakhet, Ahmed and Hussain, Shahid and Zayed, Mohra and Fathi, Mohamed},
  journal={Fractal and Fractional},
  volume={9},
  number={6},
  pages={344},
  year={2025},
  publisher={MDPI}
}

@article{zayed2025k,
  title={On the k-Bicomplex {G}amma and k-Bicomplex {B}eta Functions and Their Properties},
  author={Zayed, Mohra and Bakhet, Ahmed and Fathi, Mohamed and Saleem, Mohammed A},
  journal={Journal of Mathematics},
  volume={2025},
  number={1},
  pages={2657169},
  year={2025},
  publisher={Wiley Online Library}
}

\end{document}